\setlist{nosep}
\theoremstyle{definition}
\newtheorem{defin}{Definition}[section]
\theoremstyle{plain}
\newtheorem{theo}[defin]{Theorem}
\newtheorem{lem}[defin]{Lemma}
\newtheorem{pro}[defin]{Proposition}
\newtheorem{cor}[defin]{Corollary}
\theoremstyle{definition}
\newtheorem{exm}[defin]{Example}
\renewcommand{\H}{\mathcal{H}}
\newcommand{\n}[1]{\|#1\|}
\newcommand{\nor}{\|\cdot\|}
\renewcommand{\l}{\langle}
\renewcommand{\r}{\rangle}
\newcommand{\N}{\mathbb{N}}
\newcommand{\Z}{\mathbb{Z}}
\newcommand{\C}{\mathbb{C}}
\newcommand{\pint}{\l\cdot,\cdot\r}
\newcommand{\pin}[2]{\l#1 , #2\r}
\newcommand{\no}{\noindent}
\newcommand{\ol}{\overline}
\newcommand{\mez}{{1/2}}
\newcommand{\spann}{\overline{\text{span}}}
\renewcommand{\phi}{\varphi}
\numberwithin{equation}{section}
\renewcommand\labelenumi{\emph{(\roman{enumi})}}
\renewcommand\theenumi\labelenumi
\fillast \fontsize{12}{15}\scshape}{\thesection.}{0.8em}{}
\fillast \fontsize{11}{12}\scshape}{\thesubsection.}{0.8em}{}
\begin{document}
	
\thispagestyle{plain}

\begin{center}
	\large
	{\uppercase{\bf On some dual frames multipliers with\\ at most countable spectra}} \\
	\vspace*{0.5cm}
	{\scshape{Rosario Corso}}
\end{center}

\normalsize 
\vspace*{1cm}	

\small 

\begin{minipage}{12.5cm}
	{\scshape Abstract.} A dual frames multiplier is an operator consisting of analysis, multiplication and synthesis processes, where the analysis and the synthesis are made by two dual frames in a Hilbert space, respectively. In this paper we investigate the spectra of some dual frames multipliers giving, in particular, conditions to be at most countable. The contribution extends the results available in literature about the spectra of Bessel multipliers with symbol decaying to zero and of multipliers of dual Riesz bases. 
\end{minipage}

\vspace*{.5cm}

\begin{minipage}{12.5cm}
	{\scshape Keywords:} multipliers, dual frames, invertibility, spectra.
\end{minipage}

\vspace*{.5cm}

\begin{minipage}{12.5cm}
	{\scshape MSC (2010):}  42C15, 47A10, 47A12.
\end{minipage}

\normalsize

\section{Introduction}

This paper deals with the spectra of dual frames multipliers. 
To explain what is a dual frames multiplier, we first recall that a {\it frame} for a separable Hilbert space $(\H,\pint, \nor)$ is a sequence $\varphi:=\{\varphi_n\}_{n\in \N}$ of vectors of $\H$ such that there exist $A,B>0$ and
\begin{equation}
\label{def_frame}
A\n{f}^2 \leq \sum_{n\in \N} |\pin{f}{\varphi_n}|^2\leq B\n{f}^2, \qquad \forall f\in \H. 
\end{equation}
If $\varphi$ is a frame, then there exists at least a frame $\psi:=\{\psi_n\}_{n\in \N}$ such that 
\begin{equation}
\label{dual}
f=\sum_{n\in \N}\pin{f}{\psi_n}\varphi_n, \qquad \forall f\in \H, 
\end{equation}
and we say that $\varphi$ and $\psi$ are {\it dual frames}\footnote{Equivalently, $\varphi$ and $\psi$ are dual frames if
	$
	f=\sum_{n\in \N} \pin{f}{\varphi_n}\psi_n$ for every $f\in \H$.}. 
If $m=\{m_n\}_{n\in \N}\in \ell^\infty$, i.e. a bounded complex sequence,  $\varphi$ and $\psi$ are dual frames, then the operator $M_{m,\varphi,\psi}$ given by 
\begin{equation}
\label{multipl}
M_{m,\varphi,\psi}f=\sum_{n\in \N} m_n\pin{f}{\psi_n} \varphi_n, \qquad f\in \H
\end{equation}
is called a {\it dual frame multiplier}  
(with {\it symbol}  $m$).

Dual frames $\varphi$ and $\psi$ allow to reconstruct every element  $f\in \H$ by \eqref{dual}, i.e. by an analysis and by a synthesis through $\varphi$ and $\psi$, respectively. A dual frames multiplier is then a process of analysis and synthesis with an intermediate scaling by $m$. For this reason, dual frames multipliers find application in physics \cite{Gazeau}, signal processing \cite{FeiNow,Matz} and acoustics \cite{BHNS,Balazs_appl} (see also \cite{Balazs_surv}). Dual frames multipliers are special cases of {\it Bessel multipliers} which are defined by \eqref{multipl}, but where $\varphi$ and $\psi$ are now {\it Bessel sequences} for $\H$, i.e. sequences of elements of $\H$ satisfying 
\begin{equation}
\label{def_Bess}
\sum_{n\in \N} |\pin{f}{\varphi_n}|^2\leq B_{\varphi}\n{f}^2 \;\text{ and }\; \sum_{n\in \N} |\pin{f}{\psi_n}|^2\leq B_{\psi}\n{f}^2, 
\end{equation}
for some $B_{\varphi},B_{\psi}>0$ and every $f\in \H$. Bessel multipliers have been introduced in \cite{Balazs_basic_mult} and further studied in \cite{Balazs_inv_mult2,Balazs_inv_mult,Detail_mult,Riesz_mult,Dual_mult,Comm_mult}. Multipliers have been investigated also in continuous \cite{Mult_cont} and distributional contexts \cite{Corso_distr_mult}. 

For few classes of Bessel multipliers the structure of the spectra is known in literature. 
In particular, we mention the class of Bessel multipliers $M_{m,\varphi,\psi}$ with $\displaystyle \lim_{n\to +\infty}m_n=0$ (Proposition \ref{lem_comp}) and the class of multipliers $M_{m,\varphi,\psi}$ with $\varphi$ and $\psi$ dual Riesz bases (Proposition \ref{pro_Riesz}). Moreover, in the first case the spectrum is at most countable, while in the second case this happens if and only if $m$ has an at most countable set of limit points\footnote{Throughout the paper, a {\it limit (accumulation) point} for a sequence $m$ indicates the limit of a converging subsequence of $m$. }.  
In this paper we focus on some classes of dual frames multipliers, give descriptions of the spectra and sufficient conditions to ensure that the latter are at most countable. More precisely, the main contributions are the following, which are proved in Sections \ref{sec:1} and \ref{sec:2}, respectively. We denote by $\sigma(T)$, $\sigma_p(T)$, $\sigma_c(T)$, $\sigma_r(T)$ and $\sigma_e(T)$  the {spectrum}, {point spectrum}, {continuous spectrum}, {residual spectrum} and {essential spectrum} of a bounded operator $T$ on $\H$, respectively. 

\begin{theo}
	\label{cor_pert_compact}
	Let $\varphi,\psi$ be dual frames for $\H$ and $m\in \ell^\infty$ a sequence with only finitely many limit points $\{l_1,\dots,l_j\}$. In particular, suppose that
	 $\N=\cup_{k=1}^j I_k$ and, for $k=1,\dots, j$, $|I_k|=\infty$ and $\{m_n:n\in I_k\}$ has only $l_k$ as limit point. Define $m'=\{m_n'\}$ as $m_n'=l_k$ if $n\in I_k$. Then the following statements hold.
	\begin{enumerate}
		\item $\sigma(M_{m,\varphi,\psi})$ is at most countable if and only if $\sigma(M_{m',\varphi,\psi})$ is so.
		\item $\sigma_e(M_{m,\varphi,\psi})=\sigma_e(M_{m',\varphi,\psi})$.
		\item Any point of $\sigma(M_{m,\varphi,\psi})\backslash\sigma_e(M_{m,\varphi,\psi})$ is an isolated eigenvalue with finite multiplicity.
		\item The set of the limit points for $\sigma_p(M_{m,\varphi,\psi})$ is contained in $\sigma_e(M_{m,\varphi,\psi})$.
		\item If $\lambda \notin \sigma_e(M_{m,\varphi,\psi})$, then  $\lambda$ is an eigenvalue of $M_{m,\varphi,\psi}$ if and only if $\ol \lambda$ is an eigenvalue of $M_{m,\varphi,\psi}^*=M_{ m^*,\psi,\varphi}$.  
	\end{enumerate} 
\end{theo}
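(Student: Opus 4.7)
The guiding idea is that $m$ and $m'$ differ by a null sequence. Indeed, on each (infinite) $I_k$, the only limit point of $(m_n)_{n\in I_k}$ is $l_k$, so $m_n\to l_k$ along $I_k$ (a bounded sequence with a unique accumulation point converges to it); combining over the finitely many indices $k$ yields $(m-m')_n\to 0$ as $n\to +\infty$. Hence by Proposition~\ref{lem_comp},
\[
K := M_{m,\varphi,\psi}-M_{m',\varphi,\psi}=M_{m-m',\varphi,\psi}
\]
is a compact operator on $\H$, so $M_{m,\varphi,\psi}$ is a compact perturbation of $M_{m',\varphi,\psi}$.

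All five items are then to be deduced from standard spectral perturbation theory applied to this pair. Item (ii) is Weyl's theorem on the invariance of the essential spectrum under compact perturbations. For (iii), I would note that for $\lambda\notin\sigma_e(M_{m,\varphi,\psi})$ the operator $M_{m,\varphi,\psi}-\lambda I$ is Fredholm of index zero, inheriting this from $M_{m',\varphi,\psi}-\lambda I$ by the stability of the Fredholm index under compact perturbations; hence $\lambda$, if in $\sigma$, is an eigenvalue with finite-dimensional kernel. For the isolation of such $\lambda$, I would invoke the analytic Fredholm theorem on each connected component $\Omega$ of $\C\setminus\sigma_e(M_{m,\varphi,\psi})$: the resolvent $\mu\mapsto (M_{m,\varphi,\psi}-\mu I)^{-1}$ is either meromorphic on $\Omega$ with finite-rank residues, or nowhere defined on $\Omega$. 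The second alternative is ruled out on the unbounded component because $\sigma(M_{m,\varphi,\psi})$ is bounded; bounded components are handled by transferring to $M_{m',\varphi,\psi}$, whose essential spectrum (and hence whose decomposition of $\C\setminus\sigma_e$ into components) agrees with that of $M_{m,\varphi,\psi}$ by (ii), and by using the finite-valued structure of $m'$ to exhibit a resolvent point of $M_{m',\varphi,\psi}$ in each such component.

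Items (iv) and (v) are then quick corollaries. A limit point of $\sigma_p(M_{m,\varphi,\psi})$ is a non-isolated point of $\sigma(M_{m,\varphi,\psi})$, so by (iii) it must lie in $\sigma_e(M_{m,\varphi,\psi})$, giving (iv). For (v), Fredholm index zero yields $\dim\ker(M_{m,\varphi,\psi}-\lambda I)=\dim\ker(M_{m,\varphi,\psi}^*-\ol\lambda\, I)$ for $\lambda\notin\sigma_e$; the identity $M_{m,\varphi,\psi}^*=M_{m^*,\psi,\varphi}$ follows by a direct computation from \eqref{multipl}. Finally, (i) is obtained by combining (ii) and (iii): the set $\sigma(M_{m,\varphi,\psi})\setminus\sigma_e(M_{m,\varphi,\psi})$ is discrete in the bounded set $\sigma(M_{m,\varphi,\psi})$, hence at most countable, so countability of $\sigma(M_{m,\varphi,\psi})$ is equivalent to that of $\sigma_e(M_{m,\varphi,\psi})$; applying (ii) and the analogous reasoning for $M_{m',\varphi,\psi}$ closes the loop.

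The main technical difficulty lies in the isolation step of (iii), namely in excluding the degenerate case of a bounded connected component of $\C\setminus\sigma_e$ lying entirely in $\sigma$. This is the point at which the hypothesis that $m$ has only finitely many limit points enters decisively, through the finite-valued replacement $m'$, and carrying it through cleanly is the most delicate part of the argument.
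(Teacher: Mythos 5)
Your opening step coincides exactly with the paper's proof: $m-m'$ is a null sequence (finitely many pieces $I_k$, on each of which $m_n\to l_k$), so $M_{m,\varphi,\psi}-M_{m',\varphi,\psi}=M_{m-m',\varphi,\psi}$ is compact by Proposition~\ref{lem_comp}, and everything is to be read off from perturbation theory of the essential spectrum. The difference is what happens next: the paper does not re-derive any operator theory, it simply applies Lemma~\ref{lem_essen} (Kato's Theorems IV.5.33 and IV.5.35, Conway's Proposition XI.4.2), whereas you attempt to prove the relevant facts from scratch via Fredholm stability and the analytic Fredholm theorem, and that is where your argument has a genuine gap.

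Two of your steps are not established. First, the claim that $M_{m,\varphi,\psi}-\lambda I$ has index zero for every $\lambda\notin\sigma_e(M_{m,\varphi,\psi})$ (used in (iii) and again in (v)) is only ``inherited'' from $M_{m',\varphi,\psi}-\lambda I$ if you already have a resolvent point of $M_{m',\varphi,\psi}$ in the same connected component of $\C\setminus\sigma_e$; you assert that the finite-valued structure of $m'$ yields such a point in every bounded component, but this is never carried out. Second, and more seriously, even granting such a point, your repair of the isolation step does not work: a resolvent point of the \emph{unperturbed} operator in a bounded component says nothing about discreteness of the spectrum of the \emph{perturbed} operator there, because a compact (even rank-one) perturbation can turn a resolvent region into a region filled with non-isolated eigenvalues. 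A standard example: the bilateral shift $U$ has the open unit disk in its resolvent set, while its rank-one perturbation unitarily equivalent to $S^*\oplus S$ ($S$ the unilateral shift) has every point of the open disk as an eigenvalue of finite multiplicity, none isolated, with the same essential spectrum (the circle) and index zero throughout the disk. So to conclude isolation in a bounded component you would need a resolvent point of $M_{m,\varphi,\psi}$ itself, which your sketch does not produce; your own closing remark concedes this is the delicate point, i.e.\ the proof is left open exactly there. Your instinct that the degenerate bounded component is the crux is sound, but the paper's proof consists of citing Lemma~\ref{lem_essen}: for item (i) only countable essential spectra matter, in which case $\C\setminus\sigma_e$ is connected and unbounded, meets the resolvent set, and the issue disappears (this is Lemma~\ref{lem_essen}(ii)), while (ii)--(v) are taken directly from Lemma~\ref{lem_essen}(ii)--(iv) applied to the compactly perturbed pair. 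If you replace your analytic-Fredholm section by those citations, the remainder of your argument --- (iv) from (iii), (v) from the adjoint identity $M_{m,\varphi,\psi}^*=M_{m^*,\psi,\varphi}$, and (i) from (ii) together with countability of the discrete part --- is in order and matches the paper.
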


\begin{theo}
	\label{th_main_ex1}
	Let $\varphi$ be a frame for $\H$ with finite excess\footnote{The excess of $\varphi$ is $e(\varphi)=\sup\{|I|:I\subseteq \N \text{ and } \spann\{\varphi_n\}_{n\in \N\backslash I}=\spann\{\varphi_n\}_{n\in \N}\}$ (see also the beginning of Section \ref{sec:2}).} and $\psi$ is a dual frame of $\varphi$. Let $m\in \ell^\infty$ be a sequence with an at most countable set of limit points $\{l_i\}_{i\in I}$. The following statements hold.  
	\begin{enumerate}
		\item $\sigma(M_{m,\varphi,\psi})$ is at most countable.
		\item If $m$ contains only a finite number of distinct elements, then $\sigma(M_{m,\varphi,\psi})$ is finite and $\sigma(M_{m,\varphi,\psi})=\sigma_p(M_{m,\varphi,\psi})$. 
		\item $\{l_i\}_{i\in I}\subset \sigma(M_{m,\varphi,\psi})$ and  $\sigma(M_{m,\varphi,\psi})\backslash\{l_i\}_{i\in I}=\sigma_p(M_{m,\varphi,\psi})\backslash\{l_i\}_{i\in I}$.
		\item $\sigma_r(M_{m,\varphi,\psi})=\varnothing$.
		\item Each eigenvalue $\lambda$ of $M_{m,\varphi,\psi}$ with $\lambda\notin \{l_i\}_{i\in I}$ is isolated in $\sigma(M_{m,\varphi,\psi})$ and has finite multiplicity.
		\item  In the case where $\sigma(M_{m,\varphi,\psi})$ is infinite, if $\lambda$ is a limit point for $\sigma_p(M_{m,\varphi,\psi})$, then $\lambda\in\{l_i\}_{i\in I}$.
		\item If $\lambda \notin \{l_i\}_{i\in I}$, then $\lambda$ is an eigenvalue of $M_{m,\varphi,\psi}$ if and only if $\ol\lambda$ is an eigenvalue of ${M_{m,\varphi,\psi}}^*=M_{ m^*,\psi,\varphi}$.
	\end{enumerate}
	Suppose, in addition, that $m$ is a real sequence with a finite set of limit points $\{l_i\}_{i\in I}$ and $\psi$ is the canonical dual of $\varphi$. The following statements hold.   
	\begin{enumerate}
		\item[\emph{(viii)}] If $m$ contains infinitely many different elements, then $\sigma(M_{m,\varphi,\psi})$ is countable  and $\{l_i\}_{i\in I}$ is the set of limit points for $\sigma_p(M_{m,\varphi,\psi})$.
	\end{enumerate}
\end{theo}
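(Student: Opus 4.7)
The strategy is a two-step reduction of $M_{m,\varphi,\psi}$ to a dual Riesz bases multiplier (for which Proposition \ref{pro_Riesz} applies) by means of a finite-rank perturbation, exploiting the finite excess of $\varphi$. First, for any dual frame $\psi$ of $\varphi$, the difference of analysis operators $C_\psi-C_{\tilde\varphi}$ (with $\tilde\varphi=S_\varphi^{-1}\varphi$ the canonical dual) is valued in $\ker D_\varphi$, because both $C_\psi$ and $C_{\tilde\varphi}$ are right inverses of $D_\varphi$; since $\dim\ker D_\varphi=e(\varphi)<\infty$, this difference has finite rank, whence $M_{m,\varphi,\psi}-M_{m,\varphi,\tilde\varphi}$ is finite-rank as well. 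Second, taking a set $J\subset\N$ with $|J|=e(\varphi)$ whose removal turns $\varphi$ into a Riesz basis $\varphi^R=\{\varphi_n\}_{n\in\N\backslash J}$, the identity $S_\varphi=S_{\varphi^R}+\sum_{n\in J}\pin{\cdot}{\varphi_n}\varphi_n$ together with the Sherman--Morrison--Woodbury formula show that $M_{m,\varphi,\tilde\varphi}$ differs by a finite-rank operator from the dual Riesz bases multiplier $M_{\tilde m,\varphi^R,\widetilde{\varphi^R}}$ with $\tilde m=m|_{\N\backslash J}$.

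With this reduction, Proposition \ref{pro_Riesz} identifies $\sigma(M_{\tilde m,\varphi^R,\widetilde{\varphi^R}})=\ol{\{\tilde m_n\}_{n\in\N\backslash J}}$, which is at most countable. Weyl's theorem on compact perturbations then gives $\sigma_e(M_{m,\varphi,\psi})=\{l_i\}_{i\in I}$, proving (i); each point of $\sigma(M_{m,\varphi,\psi})\backslash\{l_i\}_{i\in I}$ is an isolated eigenvalue of finite multiplicity, giving (iii), (v), and (vi). Statement (ii) is immediate, since a finite range of $m$ makes $\sigma_e$, hence $\sigma$, finite. The Fredholm alternative at each isolated eigenvalue, applied in parallel to $M_{m,\varphi,\psi}$ and $M_{m,\varphi,\psi}^*=M_{m^*,\psi,\varphi}$ (noting that $\psi$ also has finite excess), yields (iv) and (vii).

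For (viii), with $m$ real and $\psi=\tilde\varphi$ both $M_{m,\varphi,\tilde\varphi}$ and $M_{\tilde m,\varphi^R,\widetilde{\varphi^R}}$ are self-adjoint, and their difference $F$ is a self-adjoint finite-rank operator. The spectral theorem then forces every limit point of $\sigma(M_{m,\varphi,\tilde\varphi})$ to lie in $\sigma_e=\{l_i\}_{i\in I}$, and combined with (iii) this gives $\sigma\backslash\{l_i\}_{i\in I}=\sigma_p\backslash\{l_i\}_{i\in I}$, hence the countability. When $m$ has infinitely many distinct values, the spectrum is infinite: $M_{\tilde m,\varphi^R,\widetilde{\varphi^R}}$ has all distinct $\tilde m_n$ as eigenvalues, and Weyl-type interlacing for self-adjoint finite-rank perturbations preserves all but finitely many of them. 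The same interlacing forces every $l_i$ to be an accumulation point of $\sigma_p(M_{m,\varphi,\psi})$: a subsequence of distinct $\tilde m_{n_k}$ tending to $l_i$ gives rise to distinct eigenvalues of $M_{m,\varphi,\tilde\varphi}$ accumulating at $l_i$.

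The main obstacle is the rigorous verification of the finite-rank reduction, in particular the Sherman--Morrison--Woodbury manipulation relating $S_\varphi^{-1}$ to $S_{\varphi^R}^{-1}$ and the bookkeeping needed to collect every contribution into a single finite-rank operator $F$ with $M_{m,\varphi,\psi}=M_{\tilde m,\varphi^R,\widetilde{\varphi^R}}+F$. Once this decomposition is secured, the remaining spectral statements follow from standard Fredholm theory and, for (viii), from Weyl-type interlacing for self-adjoint finite-rank perturbations.
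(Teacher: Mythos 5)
Your core reduction is sound and genuinely different from the paper's: instead of dilating $\varphi,\psi$ to a Riesz basis and its dual on a larger space via \cite{hlll} (as the paper does), you stay in $\H$, observe that $C_\psi-C_{\widetilde\varphi}$ maps into the $q$-dimensional space $N(D_\varphi)$, and compare $S_\varphi^{-1}$ with $S_{\varphi^R}^{-1}$ through the identity $S_\varphi^{-1}-S_{\varphi^R}^{-1}=-S_\varphi^{-1}\bigl(\sum_{n\in J}\pin{\cdot}{\varphi_n}\varphi_n\bigr)S_{\varphi^R}^{-1}$ (no Woodbury bookkeeping is really needed, so the step you flag as the main obstacle is fine). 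Since $M_{\widetilde m,\varphi^R,\widetilde{\varphi^R}}$ is similar to the diagonal operator with diagonal $\widetilde m$, its essential spectrum is the set of limit points of $m$, and your finite-rank perturbation argument then gives $\sigma_e(M_{m,\varphi,\psi})=\{l_i\}_{i\in I}$; statements (i), (iii), (v), (vi), (vii) follow from Lemma \ref{lem_essen} exactly as in the paper. However, (ii) and (iv) are not covered. For (ii), the inference ``$\sigma_e$ finite, hence $\sigma$ finite'' is false (a compact self-adjoint operator with infinite spectrum has $\sigma_e=\{0\}$); the paper obtains finiteness because, off the finitely many values of $m$, membership in $\sigma$ reduces to the vanishing of $\det(\mathcal{A}_\lambda)$, a rational function of $\lambda$ (Lemma \ref{lem_spec_ex}); you would need an analogous argument (e.g. rationality of your perturbation determinant), and you say nothing about $\sigma=\sigma_p$, which requires showing that values attained infinitely often are eigenvalues (non-completeness of $(m-\lambda)\varphi$, via Theorem \ref{th_inv}(i)). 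For (iv), the Fredholm alternative only treats $\lambda\notin\sigma_e$, where $\lambda$ is an eigenvalue and hence not residual anyway; the actual content is excluding $l_i\in\sigma_r$, which your argument does not touch and which the paper handles through parts (a),(b) of Lemma \ref{lem_spec_ex} (eigenvectors from non-completeness, or approximate eigenvectors built from the dual Riesz basis).

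The more serious gap is (viii). Your key claim that $M_{m,\varphi,\widetilde\varphi}$ and $M_{\widetilde m,\varphi^R,\widetilde{\varphi^R}}$ are self-adjoint for real $m$ is wrong: $M_{m,\varphi,\widetilde\varphi}^*=M_{m,\widetilde\varphi,\varphi}\neq M_{m,\varphi,\widetilde\varphi}$ in general; each operator is only \emph{similar} to a self-adjoint one, and via different similarities ($S_\varphi^{\pm 1/2}$ versus $S_{\varphi^R}^{\pm 1/2}$), so their difference is not a self-adjoint finite-rank operator and Weyl-type interlacing does not apply as stated. Moreover, even in a genuinely self-adjoint setting, a rank-$q$ perturbation does not ``preserve all but finitely many'' eigenvalues -- every eigenvalue may move; what survives is the count of eigenvalues in an interval up to an error controlled by the rank, which is precisely the theorem of \cite{Behncke} the paper invokes. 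The paper's route fixes exactly this: conjugate by $S_\varphi^{-1/2}$ to reduce to a Parseval frame $\rho$ with $M_{m,\rho,\rho}$ self-adjoint, then use the dilation of \cite{hlll} with $\xi$ an orthonormal basis of $\mathcal{K}$, so that $M_{m,P\xi,P\xi}$ is a self-adjoint finite-rank perturbation of the diagonal operator $M_{m,\xi,\xi}$ on the same space, and apply Behncke's eigenvalue-counting theorem to produce infinitely many eigenvalues accumulating at each $l_i$. To salvage your approach for (viii) you would need a comparison operator that is simultaneously self-adjoint and has known eigenvalues on the same space, which your candidate $M_{\widetilde m,\varphi^R,\widetilde{\varphi^R}}$ is not.
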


The topic about the spectra is, of course, more general than that about the invertibility of multipliers, which was the subject of \cite{Balazs_inv_mult2,Balazs_inv_mult,Detail_mult,Riesz_mult,Dual_mult}.
In Theorem \ref{th_inv} (which is used to prove Theorem \ref{th_main_ex1}) we provide a characterization of the invertibility of Bessel multipliers, completing some necessary conditions given in \cite{Balazs_inv_mult}.  

The paper has also a connection with \cite{Bag_Kuz}, which concerns multipliers $M_{m,\phi,\phi}$ where $m$ is real and $\phi$ is a {\it Parseval frame} for $\H$, i.e. a frame for $\H$ for which  \eqref{def_frame} holds with $A=B=1$ (and in that case $\phi$ is a dual of itself).  As seen in Theorem \ref{th_main_ex1} we give a special attention to dual frames multipliers $M_{m,\varphi,\psi}$ involving a real symbol $m$, which is the most interesting case in applications (where  $m$ is sometimes even chosen to be a sequence of $0$ and $1$, see \cite{Balazs_surv}). However, we put the results in more generality, do not assuming necessarily that $\psi=\varphi$.

\section{Preliminaries}
\label{sec:pre}

We write $\H$ to indicate a separable Hilbert space with inner product $\pint$ and norm $\nor$. 
Given an operator $T$ acting between two Hilbert spaces $\H_1$ and $\H_2$, we denote by  $R(T)$ and $N(T)$ the  {\it range} and {\it kernel} of $T$, respectively, and by $T^*$ its {\it adjoint} when $T$ is bounded. 
The notations for the {spectrum}, {point spectrum}, {continuous spectrum}, {residual spectrum} and {essential spectrum} of a bounded operator $T:\H\to \H$ have been given in the introduction. We mention that the essential spectrum of $T$ can be defined as the set of $\lambda \in \C$ such that $R(T-\lambda I)$ is not closed or $\dim N(T-\lambda I)=\dim R(T-\lambda I)^\perp=\infty$ (see \cite{Kato}). Throughout the paper we will make use of some properties of the essential spectrum of an operator, which we summarize below. 

\begin{lem}
	\label{lem_essen}
	Let $T:\H\to \H$ be a bounded operator. The following statements hold. 
	\begin{enumerate}
		\item {\cite[Section IV.5.6]{Kato}} $\sigma_e(T)\subset\sigma(T)$.
		\item {\cite[Theorem IV.5.33]{Kato}} If $\sigma_e(T)$ is at most countable, then $\sigma(T)$ is at most countable and any point of $\sigma(T)\backslash\sigma_e(T)$ is an isolated eigenvalue of $T$ with finite multiplicity. As a consequence, the set of the limit points for $\sigma_p(T)$ is contained in $\sigma_e(T)$. 
		\item {\cite[Theorem IV.5.35]{Kato}} The essential spectrum is invariant under compact perturbation, i.e. if $B$ is a compact operator on $\H$, then $\sigma_e(T+B)=\sigma_e(T)$. 
		\item {\cite[Proposition XI.4.2]{Conway}} $\lambda\in \sigma_e(T)$ if and only if $\ol \lambda \in \sigma_e(T^*)$. As a  consequence, if $ \lambda \notin \sigma_e(T)$, then $\lambda$ is an eigenvalue of $T$ if and only if $\ol \lambda$ is an eigenvalue of $T^*$.
	\end{enumerate}
\end{lem}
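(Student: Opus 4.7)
The plan is to verify each of the four items by invoking standard operator-theoretic tools, matching the textbook proofs cited in the statement. The unifying idea is to describe $\sigma_e(T)$ as the set of $\lambda\in\C$ for which $T-\lambda I$ fails to be Fredholm (i.e.\ fails to have closed range together with finite-dimensional kernel and cokernel), and then to exploit the well-known stability and duality properties of Fredholm operators.

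For (i), if $\lambda\notin\sigma(T)$ then $T-\lambda I$ is invertible, so $R(T-\lambda I)=\H$ is closed and both its kernel and the orthogonal complement of its range reduce to $\{0\}$; hence $\lambda\notin\sigma_e(T)$. For (iii), I would invoke the classical fact that the Fredholm property is stable under compact perturbations, which gives that $T-\lambda I$ is Fredholm if and only if $(T+B)-\lambda I$ is so, hence $\sigma_e(T)=\sigma_e(T+B)$. For (iv), every closed-range operator $S$ satisfies $N(S^*)=R(S)^\perp$ and $R(S^*)^\perp=N(S)$, from which one deduces that $T-\lambda I$ is Fredholm if and only if $(T-\lambda I)^*=T^*-\ol\lambda I$ is so, giving the first equivalence. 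Off the essential spectrum, $T-\lambda I$ is in fact Fredholm of index zero, so $\dim N(T-\lambda I)=\dim N(T^*-\ol\lambda I)$, which yields the consequence about eigenvalues.

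The most substantial item is (ii). Here I would use the analytic Fredholm alternative: on each connected component of $\C\back\sigma_e(T)$, the operator-valued map $\lambda\mapsto (T-\lambda I)^{-1}$ is meromorphic with poles precisely at the eigenvalues of $T$, which are of finite multiplicity and form a discrete subset of the component (the unbounded component contains a whole neighbourhood of infinity lying in the resolvent set, since $\sigma(T)$ is bounded). Hence $\sigma(T)\back\sigma_e(T)$ is a discrete collection of isolated eigenvalues of finite multiplicity, and in particular at most countable. Combined with the at-most-countable $\sigma_e(T)$, this gives the countability of $\sigma(T)$; the assertion about limit points of $\sigma_p(T)$ then follows because any such limit point cannot be isolated in $\sigma(T)$, and must therefore belong to $\sigma_e(T)$.

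Since every assertion in the lemma is attributed to a standard reference, there is no genuinely novel obstacle here; the only substantial ingredient to be absorbed is the analytic Fredholm alternative, feeding into (ii). Once that is accepted, the remaining items (i), (iii) and (iv) reduce to elementary manipulations with Fredholm indices and orthogonal complements, as sketched above.
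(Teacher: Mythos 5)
The paper itself offers no proof of Lemma~\ref{lem_essen}: it is a summary of results quoted from \cite{Kato} and \cite{Conway}, so there is no internal argument to compare yours against. Your route --- Fredholm stability for (i) and (iii), duality for (iv), and the analytic Fredholm alternative for (ii) --- is the standard textbook one, and (i) and (iii) are fine. Note, however, that the paper's $\sigma_e(T)$ is Kato's \emph{semi-}Fredholm essential spectrum ($R(T-\lambda I)$ closed and at least one of $\dim N(T-\lambda I)$, $\dim R(T-\lambda I)^\perp$ finite), not the Fredholm one you work with; all four items hold for either notion, but you should either use the definition actually in force or justify the substitution.

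Two steps would fail as written, and the unilateral shift $S$ (with $Se_n=e_{n+1}$) witnesses both. First, in (ii), the claim that on \emph{each} connected component of $\C\back\sigma_e(T)$ the resolvent is meromorphic with poles exactly at the eigenvalues is false in general: for $S$ the open unit disc is such a component, $S-\lambda I$ is Fredholm of index $-1$ there, never invertible and with empty point spectrum, so the whole disc lies in $\sigma(S)\back\sigma_e(S)$. The analytic Fredholm alternative needs one point of invertibility in the component, which you supply only for the unbounded one. The missing --- and essential --- observation is that the complement of an at most countable subset of $\C$ is connected, so under the hypothesis of (ii) there is a single component, it contains a neighbourhood of infinity, the index vanishes identically on it, and the alternative applies everywhere; this is precisely where countability enters and it must be said. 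Second, in (iv) you assert that off the essential spectrum $T-\lambda I$ is Fredholm of index zero; this is false ($\lambda=0$ for $S$), and in fact the stated ``consequence'' of (iv) fails there: $0$ is not an eigenvalue of $S$ but is one of $S^*$. That equivalence of eigenvalues requires vanishing index, which again holds once the component of $\lambda$ in the (semi-)Fredholm domain meets the resolvent set --- e.g.\ under the countability hypothesis of (ii), which is the only situation in which the paper invokes it. The first assertion of (iv), $\lambda\in\sigma_e(T)$ iff $\ol\lambda\in\sigma_e(T^*)$, is correctly handled by your duality argument.
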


We denote by $\ell^2$ (respectively, $\ell^\infty$)  the usual spaces of square summable (respectively, bounded) complex sequences indexed by $\N$. 

We now recall some notions and elementary results about frame theory, that can be found e.g. in \cite{Chris}. 
A sequence $\varphi=\{\varphi_n\}_{n\in \N}$ is {\it complete} in $\H$ if its linear span is dense in $\H$ if and only if $\pin{\varphi_n}{f}=0$ for every $n\in \N$ implies $f=0$.  We make the distinction between the {\it linear span}, denoted by span$(\varphi)$, and the {\it closed linear span}, denoted by $\spann(\varphi)$. 

Let $\varphi$ be a Bessel sequence in $\H$ (the definition has been given in \eqref{def_Bess}). We denote by $C_\varphi:\H\to \ell^2$ the {\it analysis operator} of $\varphi$ defined by $C_\varphi f=\{\pin{f}{\varphi_n}\}$ and by $D_\varphi:\ell^2 \to \H$ the {\it synthesis operator} of $\varphi$ defined by $D_\varphi \{c_n\}=\sum_{n\in \N} c_n \varphi_n$.  It is well-known that these operators are bounded and $D_\varphi=C_\varphi^*$. 

The definitions of frame and Parseval frame for $\H$ can be found in the introduction. We recall that a frame for $\H$ is, in particular, complete in $\H$. 
If $\varphi=\{\varphi_n\}_{n\in \N}$ is a frame for $\H$, then the {\it frame operator} $S_\varphi=D_\varphi C_\varphi:\H \to \H$ is bounded and bijective, $\{S_\varphi^{-1} \varphi_n\}_{n\in \N}$ is a dual frame of $\varphi$, called the {\it canonical dual}, and $\{S_\varphi^{-\mez} \varphi_n\}_{n\in \N}$ is a Parseval frame for $\H$, called the {\it canonical Parseval frame} of $\varphi$.

	A {\it Riesz basis} $\varphi$ for $\H$  is a complete sequence in $\H$ satisfying for some $A,B>0$ 
	
\begin{equation}
\label{def_Riesz}
	A \sum_{n\in \N} |c_n|^2 \leq \left \| \sum_{n\in \N} c_n \varphi_n \right \|^2 \leq B\sum_{n\in \N} |c_n|^2, \qquad\forall \{c_n\}\in \ell^2.
\end{equation}

\no A Riesz basis $\varphi$ for $\H$ is a frame for $\H$ (the constants in \eqref{def_frame} can be chosen as in \eqref{def_Riesz}) and there exists a unique Riesz basis $\psi$ dual to $\varphi$ in the sense of \eqref{dual}. Moreover, a Riesz basis $\phi$ is bounded from below and from above, i.e. $0<\inf_{n\in \N}\|\phi_n\|\leq \sup_{n\in \N}\|\phi_n\|<\infty $.

Finally, given a complex sequence $m=\{m_n\}_{n\in \N}$ and a sequence $\varphi=\{\varphi_n\}_{n\in \N}$ in $\H$, we write  $m^*:=\{\ol m_n\}_{n\in \N}$ (where the bar indicates the complex conjugates of $m_n$)  and $m\varphi:=\{m_n\varphi_n\}_{n\in \N}$.

As mentioned in the introduction, the structure of the spectra is known for two types of multipliers. We summarize the corresponding results available in the literature in the following propositions\footnote{We recall that if $\varphi,\psi$ are Bessel sequences and $m\in \ell^\infty$, then $M_{m,\varphi,\psi}^*=M_{m^*,\psi,\varphi}$ (\cite[Theorem 6.1]{Balazs_basic_mult}).}.

\begin{pro}
	\label{lem_comp}
	Let $\varphi$, $\psi$ be Bessel sequences in  $\H$ and $m\in \ell^\infty$ such that $\displaystyle \lim_{n\to \infty} m_n=0$. Then $M_{m,\varphi,\psi}$ is compact. In particular, 
	\begin{enumerate}
		\item $\sigma(M_{m,\varphi,\psi})$ is at most countable;
		\item $0\in  \sigma(M_{m,\varphi,\psi})$ and  $\sigma(M_{m,\varphi,\psi})\backslash\{0\}=\sigma_p(M_{m,\varphi,\psi})\backslash\{0\}$;
		\item each eigenvalue $\lambda\neq	0$ of $M_{m,\varphi,\psi}$ is isolated in $\sigma(M_{m,\varphi,\psi})$ and has finite multiplicity;
		\item  if $\sigma(M_{m,\varphi,\psi})$ is infinite, then $0$ is the only limit point for $\sigma_p(M_{m,\varphi,\psi})$;
		\item if $\lambda \neq 0$, then $\lambda$ is an eigenvalue of $M_{m,\varphi,\psi}$ if and only if $\ol\lambda$ is an eigenvalue of ${M_{m,\varphi,\psi}}^*$.
	\end{enumerate}
\end{pro}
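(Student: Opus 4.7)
The plan is to reduce everything to the compactness of $M_{m,\varphi,\psi}$, after which parts (i)--(v) all follow from the standard Riesz--Schauder spectral theory for compact operators together with point (iv) of Lemma \ref{lem_essen}.

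The first step is to show that $M_{m,\varphi,\psi}$ is a norm limit of finite rank operators. For each $N\in\N$ let $m^{(N)}$ be the truncated symbol with $m^{(N)}_n=m_n$ for $n\leq N$ and $m^{(N)}_n=0$ otherwise. Then $M_{m^{(N)},\varphi,\psi}f=\sum_{n=1}^N m_n\pin{f}{\psi_n}\varphi_n$ has range contained in $\mathrm{span}\{\varphi_1,\ldots,\varphi_N\}$, hence is finite rank. Using the Bessel bounds $B_\varphi,B_\psi$ and Cauchy--Schwarz on the defining series, one has the standard norm estimate
\[
\|M_{m-m^{(N)},\varphi,\psi}\|\leq \|m-m^{(N)}\|_\infty\sqrt{B_\varphi B_\psi}=\sup_{n>N}|m_n|\sqrt{B_\varphi B_\psi},
\]
which tends to $0$ since $m_n\to 0$. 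So $M_{m,\varphi,\psi}$ is compact, being the operator-norm limit of finite rank operators.

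Granted compactness, the classical spectral theory of compact operators on a separable (infinite-dimensional) Hilbert space gives directly: (i) $\sigma(M_{m,\varphi,\psi})$ is at most countable; (ii) $0\in\sigma(M_{m,\varphi,\psi})$ (otherwise $M_{m,\varphi,\psi}$ would be invertible and the identity $I=M_{m,\varphi,\psi}M_{m,\varphi,\psi}^{-1}$ compact, a contradiction) and every nonzero spectral value is an eigenvalue; (iii) each nonzero eigenvalue is isolated in $\sigma(M_{m,\varphi,\psi})$ and has finite multiplicity; and (iv) when $\sigma(M_{m,\varphi,\psi})$ is infinite, its only possible accumulation point is $0$. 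Alternatively these follow from Lemma \ref{lem_essen} applied with $T=M_{m,\varphi,\psi}$ and $B=-M_{m,\varphi,\psi}$, which gives $\sigma_e(M_{m,\varphi,\psi})\subseteq\{0\}$.

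Finally, (v) is obtained from Lemma \ref{lem_essen}(iv): for $\lambda\neq 0$ we have $\lambda\notin\sigma_e(M_{m,\varphi,\psi})$, so $\lambda$ is an eigenvalue of $M_{m,\varphi,\psi}$ if and only if $\bar\lambda$ is an eigenvalue of $M_{m,\varphi,\psi}^*$, which by the identity $M_{m,\varphi,\psi}^*=M_{m^*,\psi,\varphi}$ recalled in the footnote completes the claim. The main (and essentially only) technical point is the norm estimate for Bessel multipliers used in the first paragraph; everything else is bookkeeping via compact-operator theory.
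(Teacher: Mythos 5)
Your proposal is correct and follows essentially the same route as the paper, which simply cites \cite[Theorem 6.1]{Balazs_basic_mult} for the compactness of $M_{m,\varphi,\psi}$ (proved there exactly via the finite-rank truncation and the norm bound $\|M_{m,\varphi,\psi}\|\leq \|m\|_\infty\sqrt{B_\varphi B_\psi}$ that you rederive) and then invokes standard Riesz--Schauder theory for compact operators, as you do. The only difference is that you spell out the details the paper delegates to references, including the use of Lemma \ref{lem_essen}(iv) for statement (v).
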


\begin{pro}
	\label{pro_Riesz}
	Let $\varphi$ be a Riesz basis for $\H$,  $\psi$ its canonical dual and $m\in \ell^\infty$. Then the following statements hold.
	\begin{enumerate}
		\item $\sigma(M_{m,\varphi,\psi})$ is the closure of $m$, considered as set of the complex plane;
		\item the point spectrum of $M_{m,\varphi,\psi}$ is  $\sigma_p(M_{m,\varphi,\psi})=\{m_n:n\in \N\}$;
		\item the continuous spectrum of $M_{m,\varphi,\psi}$ is the set of limit points of $m$;
		\item the residual spectrum of $M_{m,\varphi,\psi}$ is empty. 
	\end{enumerate}
\end{pro}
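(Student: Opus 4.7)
The plan is to reduce Proposition~\ref{pro_Riesz} to the spectral description of a standard diagonal multiplication operator on $\ell^2$, via a similarity transformation.

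First I would establish the similarity. Since $\varphi$ is a Riesz basis, the synthesis operator $D_\varphi:\ell^2\to\H$ is a bounded bijection with bounded inverse. Biorthogonality $\pin{\varphi_k}{\psi_n}=\delta_{kn}$ (which holds between a Riesz basis and its canonical dual) together with the reconstruction formula \eqref{dual} give $D_\varphi C_\psi=I$ on $\H$ and $C_\psi D_\varphi=I$ on $\ell^2$, so $D_\varphi^{-1}=C_\psi$. Letting $M_m:\ell^2\to\ell^2$ denote the bounded diagonal operator $M_m\{c_n\}:=\{m_n c_n\}$, a direct computation from \eqref{multipl} yields, for every $f\in\H$,
\[
M_{m,\varphi,\psi}f=\sum_{n\in\N}m_n\pin{f}{\psi_n}\varphi_n=D_\varphi M_m C_\psi f,
\]
hence $M_{m,\varphi,\psi}=D_\varphi M_m D_\varphi^{-1}$. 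Since spectrum, point spectrum, continuous spectrum and residual spectrum are all invariant under similarity by a bounded invertible operator, the four claims reduce to the corresponding ones for $M_m$ on $\ell^2$.

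For (i), if $\lambda\notin\ol{\{m_n:n\in\N\}}$ then $\inf_n|m_n-\lambda|>0$, so $\{(m_n-\lambda)^{-1}\}\in\ell^\infty$ induces a two-sided inverse of $M_m-\lambda I$; conversely each $m_n$ is in $\sigma_p(M_m)\sub\sigma(M_m)$ and the spectrum is closed. For (ii), the standard basis vector $e_n\in\ell^2$ satisfies $M_m e_n=m_n e_n$; conversely, $(M_m-\lambda I)c=0$ forces $(m_n-\lambda)c_n=0$ for every $n$, so if $\lambda\notin\{m_n\}$ then $c=0$. For (iv), when $\lambda\notin\{m_n\}$ the identity $(M_m-\lambda I)e_n=(m_n-\lambda)e_n$ with $m_n-\lambda\neq0$ shows that every $e_n$ lies in $R(M_m-\lambda I)$, so the range is dense in $\ell^2$; at $\lambda=m_k$ the point is an eigenvalue, not residual. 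Hence $\sigma_r(M_m)=\varnothing$. Finally (iii) follows by elimination from the disjoint decomposition $\sigma=\sigma_p\sqcup\sigma_c\sqcup\sigma_r$, giving $\sigma_c(M_m)=\ol{\{m_n\}}\setminus\{m_n\}$, i.e.\ exactly the set of limit points of $m$ that are not already values of the sequence.

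The only substantive step is the similarity identity $M_{m,\varphi,\psi}=D_\varphi M_m D_\varphi^{-1}$; once it is in place the spectral computation is routine. The one care point is the phrasing of (iii): under the convention that $\sigma_p,\sigma_c,\sigma_r$ partition $\sigma$, a limit point coinciding with some $m_k$ sits in $\sigma_p$ rather than $\sigma_c$, so (iii) is best read as identifying $\sigma_c$ with precisely those limit points of $m$ that are not otherwise taken as values.
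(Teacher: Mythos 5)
Your proof is correct. The paper itself disposes of this proposition in two lines: it observes that $M_{m,\varphi,\psi}-\lambda I=M_{m-\lambda,\varphi,\psi}$ and invokes the invertibility criterion for Riesz bases multipliers from \cite[Theorem 5.1]{Balazs_inv_mult} to get (i), stating that (ii)--(iv) ``follow readily''. What you do instead is make the underlying mechanism explicit: biorthogonality plus the reconstruction formula give $D_\varphi^{-1}=C_\psi$, hence the similarity $M_{m,\varphi,\psi}=D_\varphi M_m D_\varphi^{-1}$ with $M_m$ the diagonal operator on $\ell^2$, after which all four spectral parts are read off from the elementary spectral theory of diagonal operators. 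In substance this is the same reduction on which the cited invertibility theorem rests, but your version is self-contained, avoids the external reference, and actually supplies the arguments for (ii)--(iv) that the paper leaves implicit; the price is that it uses the Riesz basis structure directly rather than quoting a packaged criterion. Your caveat on (iii) is also well taken: under the convention that $\sigma_p$, $\sigma_c$, $\sigma_r$ partition $\sigma$, a limit point of $m$ that is also attained as some value $m_k$ lies in $\sigma_p$ rather than $\sigma_c$, so the precise statement is $\sigma_c(M_{m,\varphi,\psi})=\overline{\{m_n:n\in\N\}}\setminus\{m_n:n\in\N\}$, and the paper's phrasing of (iii) is slightly loose on this point.
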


Proposition \ref{lem_comp} follows by  
\cite[Theorem 6.1]{Balazs_basic_mult} and by the properties of compact operators (\cite[Theorem A.3]{Schm}). 
Proposition \ref{pro_Riesz}(i) is obtained by the fact that $M_{m,\varphi,\psi}-\lambda I=M_{m-\lambda,\varphi,\psi}$, where $m-\lambda:=\{m_n-\lambda\}_{n\in \N}$, and by an application of  
\cite[Theorem 5.1]{Balazs_inv_mult}\footnote{A different proof of Proposition \ref{pro_Riesz} was given in \cite{Corso_seq} (see the comment after Proposition 10).}. The rest of the statement of Proposition \ref{pro_Riesz} follows readily.

Under the hypothesis of Proposition \ref{lem_comp}, $\sigma(M_{m,\varphi,\psi})$ is at most countable and the same holds by Proposition \ref{pro_Riesz} if $\varphi$ is a Riesz basis of $\H$, $\psi$ is the canonical dual of $\varphi$ and $m$ has an at most countable set of limit points. Theorems \ref{cor_pert_compact} and \ref{th_main_ex1} then follow the line of the results in Propositions \ref{lem_comp} and \ref{pro_Riesz}. In particular, the latter is a special case of Theorem \ref{th_main_ex1}.

As final preliminary results we give the following characterizations including one for bijective multipliers (note that previously, in \cite{Balazs_inv_mult2,Balazs_inv_mult}, only some necessary or sufficient conditions for multipliers to be bijective have been given). We write $V+W$ (resp., $V\dotplus W$) for the {\it sum} (resp., {\it direct sum}) of two vector spaces $V,W$.

\begin{theo}
	\label{th_inv}
	Let $\varphi,\psi$ be Bessel sequences in $\H$ and $m\in \ell^\infty$. 
	For each of the following cases \emph{(i-iii)} the statements \emph{(a), (b), ...} below	are equivalent. 
	\begin{enumerate}
		\item \begin{enumerate}[label={\emph{(\alph*)}}]
			\item $M_{m,\varphi,\psi}$ is injective;
			\item $\psi$ is complete and $N(D_{m\varphi})\cap R(C_\psi)=\{0\}$;
			\item  $m\psi$ is complete and $N(D_{\varphi})\cap R(C_{{m^*}\psi})=\{0\}$. 
		\end{enumerate} 
		\item 
		\begin{enumerate}[label={\emph{(\alph*)}}]
			\item $M_{m,\varphi,\psi}$ is surjective;
			\item $\varphi$ is a frame for $\H$ and $N(D_{\varphi})+ R(C_{{m^*}\psi})=\ell^2$;
			\item $m\varphi$ is a frame for $\H$ and $N(D_{m\varphi})+ R(C_\psi)=\ell^2$.
		\end{enumerate}
		\item 
		\begin{enumerate}[label={\emph{(\alph*)}}]
			\item $M_{m,\varphi,\psi}$ is bijective;
			\item $\varphi$ is a frame for $\H$ and $N(D_{\varphi})\dotplus R(C_{m^*\psi})=\ell^2$;
			\item $m\varphi$ is a frame for $\H$ and $N(D_{m\varphi})\dotplus R(C_\psi)=\ell^2$;
			\item $\psi$ is a frame for $\H$ and $N(D_{\psi})\dotplus R(C_{m\varphi})=\ell^2$;
			\item $m\psi$ is a frame for $\H$ and $N(D_{m^*\psi})\dotplus R(C_\varphi)=\ell^2$. 
		\end{enumerate} 
	\end{enumerate}
\end{theo}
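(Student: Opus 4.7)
The plan is to treat the three cases in order, in each case exploiting the two factorizations
\begin{equation*}
M_{m,\varphi,\psi} = D_\varphi C_{m^*\psi} = D_{m\varphi} C_\psi,
\end{equation*}
both of which are obtained by unfolding \eqref{multipl}. In each case one equivalence will be read off from one factorization, with its twin coming from the other.

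For (i), I will use the identity $N(M_{m,\varphi,\psi}) = \{g \in \H : C_\psi g \in N(D_{m\varphi})\}$, which reduces injectivity of $M_{m,\varphi,\psi}$ to the two requirements that $C_\psi$ be injective (equivalently, $\psi$ complete in $\H$) and that $R(C_\psi) \cap N(D_{m\varphi}) = \{0\}$, giving (a) $\Leftrightarrow$ (b). The equivalence (a) $\Leftrightarrow$ (c) is obtained from the other factorization, using that $m\psi$ and $m^*\psi$ share the same closed linear span in $\H$ and are therefore simultaneously complete.

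For (ii), the identity $R(M_{m,\varphi,\psi}) = D_\varphi(R(C_{m^*\psi}))$ is the entry point. Surjectivity of $M_{m,\varphi,\psi}$ forces $D_\varphi$ to be onto, i.e.\ $\varphi$ is a frame; conversely, once $\varphi$ is a frame, $D_\varphi$ descends to an isomorphism $\ell^2/N(D_\varphi) \to \H$, and the condition $R(M) = \H$ becomes the requirement that every coset of $N(D_\varphi)$ meet $R(C_{m^*\psi})$, namely $\ell^2 = N(D_\varphi) + R(C_{m^*\psi})$. The symmetric argument through $M = D_{m\varphi} C_\psi$ delivers (a) $\Leftrightarrow$ (c).

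For (iii), I will combine (i) and (ii). From (a): part (ii) supplies the frame property and the sum, while (i) promotes the sum to a direct sum, since any $c = C_{m^*\psi}g$ in $N(D_\varphi) \cap R(C_{m^*\psi})$ satisfies $M_{m,\varphi,\psi}g = D_\varphi c = 0$, forcing $g = 0$ and thus $c = 0$. Conversely, from (b): part (ii) gives surjectivity, and the algebraic direct sum makes $D_\varphi$ a bijection of $R(C_{m^*\psi})$ onto $\H$, so that $M_{m,\varphi,\psi}g = 0$ first reduces to $C_{m^*\psi}g = 0$, from which $g = 0$ must then be deduced to obtain injectivity. The equivalence with (c) is symmetric, while (d) and (e) follow by applying (b) and (c) to the adjoint $M^* = M_{m^*,\psi,\varphi}$, using that $M_{m,\varphi,\psi}$ is bijective if and only if $M^*$ is. The delicate step I anticipate is precisely the last implication in (b) $\Rightarrow$ (a): the algebraic direct sum together with $\varphi$ being a frame yields surjectivity of $M_{m,\varphi,\psi}$ without difficulty, and the remaining work is to argue that these hypotheses already preclude a nontrivial kernel, that is, force $C_{m^*\psi}$ to be injective on $\H$.
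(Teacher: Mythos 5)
Your cases (i) and (ii) are correct and coincide with the paper's own argument: the two factorizations $M_{m,\varphi,\psi}=D_{m\varphi}C_\psi=D_{\varphi}C_{m^*\psi}$, the transfer of the kernel for injectivity, and the coset argument for surjectivity (with surjectivity of $D_\varphi$ equivalent to $\varphi$ being a frame). For (iii) the paper likewise just combines (i) and (ii), and your derivation of (a)$\Rightarrow$(b), of surjectivity in (b)$\Rightarrow$(a), and of (d), (e) via the adjoint $M_{m,\varphi,\psi}^*=M_{m^*,\psi,\varphi}$ is fine. The genuine gap is exactly the step you flag and postpone: in (b)$\Rightarrow$(a) you still must show that $C_{m^*\psi}$ is injective, i.e.\ that $m\psi$ is complete. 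This is not a deferrable technicality: it is precisely the clause of (i)(c) that does not appear in (iii)(b), so the combination of (i) and (ii) does not produce it, and it does not follow from (b) for general Bessel sequences.

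Concretely, let $\{e_n\}_{n\in\N}$ be an orthonormal basis of $\H$, $\varphi_n=e_n$, $\psi_n=e_{n+1}$ and $m_n=1$ for all $n$. Then $\varphi$ is a frame, $N(D_\varphi)=\{0\}$ and $R(C_{m^*\psi})=R(C_\psi)=\ell^2$, so the condition in (iii)(b) holds, while $M_{m,\varphi,\psi}f=\sum_{n\in\N}\pin{f}{e_{n+1}}e_n$ annihilates $e_1$ and is not injective. Hence the implication you left open cannot be established from (b) as you set it up: to close the argument one must carry the completeness of $m\psi$ (respectively of $\psi$ for (c), and the analogous conditions for (d), (e)) as part of the hypothesis --- which is exactly what the literal conjunction of (i)(c) and (ii)(b) yields --- or work in a setting where it is automatic, such as the dual frames setting in which the theorem is later applied (Lemma \ref{lem_spec_ex}), where $\psi$ is a frame and hence complete. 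With that clause available, your reduction finishes verbatim: $M_{m,\varphi,\psi}g=0$ gives $C_{m^*\psi}g\in N(D_\varphi)\cap R(C_{m^*\psi})=\{0\}$, and completeness of $m\psi$ then forces $g=0$.
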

\begin{proof}
	\begin{enumerate}[label={(\roman*)}]
		\item Since $M_{m,\varphi,\psi}=D_{m\varphi} C_\psi=D_{\varphi} C_{m^*\psi}$, the equivalence of the statements follows from the fact that $f\in N(M_{m,\varphi,\psi})$ if and only if $C_\psi f \in N(D_{m\varphi})$ if and only if $C_{m^*\psi} f \in N(D_{\varphi})$. 
		\item First we assume that $M_{m,\varphi,\psi}$ is surjective. Writing $M_{m,\varphi,\psi}=D_{\varphi}C_{m^*\psi}$ we find that $D_{\varphi}$ is surjective, i.e. $\varphi$ is a frame for $\H$ by \cite[Theorem 5.5.1]{Chris}.
		Moreover, let $\{c_n\}\in \ell^2$ and let $g=D_\varphi\{c_n\}$. Thus there exists $f\in \H$ such that $g=M_{m,\varphi,\psi}f=D_{\varphi}C_{m^*\psi} f$. This implies that $\{c_n\}-C_{m^*\psi} f$ belongs to $N(D_{\varphi})$ and therefore $N(D_{\varphi})+ R(C_{m^*\psi})=\ell^2$. \\
		Now suppose that (b) holds. Let $g\in \H$. There exists $\{c_n\}\in \ell^2$ such that $D_{\varphi} \{c_n\}=g$ again by \cite[Theorem 5.5.1]{Chris}. Furthermore, there exist $\{d_n\}\in N(D_{\varphi})$ and $f\in \H$ such that $\{c_n\}=\{d_n\}+C_{m^*\psi} f$. Therefore, $g=D_{\varphi} C_{m^* \psi} f=M_{m,\varphi,\psi} f$, i.e. $M_{m,\varphi,\psi}$ is surjective. The equivalence of (a) and (c) can be proved similarly. 
		\item It follows by (i) and (ii). \qedhere
	\end{enumerate}
\end{proof}

\section{Proof of Theorem \ref{cor_pert_compact}}
\label{sec:1}

In Theorem \ref{cor_pert_compact}, which we are going to prove, we analyze the first class of dual frames multipliers, characterized by symbols $m$ with a finite number of limit points. The assumption that $m$ has a finite set of limit points will be present also later and, of course, is not restrictive in applications because of practical reasons. 
What the theorem says is that to decide whether the spectrum of a dual frames multiplier $M_{m,\varphi,\psi}$ is at most countable it is sufficient to check the spectrum of a related multiplier $M_{m',\varphi,\psi}$, where $m'$ is the union of constant sequences.

\begin{proof}[Proof of Theorem \ref{cor_pert_compact}]
	The statement is directly proved noting that
	$M_{m,\varphi,\psi}$ is a compact perturbation of $M_{m',\varphi,\psi}$, so Lemma \ref{lem_essen} applies. Indeed, $M_{m,\varphi,\psi}=M_{m',\varphi,\psi}+M_{m'',\varphi,\psi}$ where $m''$ is a sequence converging to $0$ and hence $M_{m'',\varphi,\psi}$ is compact by Proposition \ref{lem_comp}. 
\end{proof}

\begin{exm}
	Let  $\varphi=\{\frac{1}{\sqrt{2}}e_1,\frac{1}{\sqrt{2}}f_1, \frac{1}{\sqrt{2}}e_2, \frac{1}{\sqrt{2}}f_2, \dots\}$ where $\{e_n\}$ and $\{f_n\}$ are orthonormal bases for $\H$. A simple computation shows that $\varphi$ is a Parseval frame for $\H$. Now let $m=\{m_n\}$ be such that $m_{2n-1}=\frac{1}{n+1}$ and $m_{2n}=2-\frac{1}{n+1}$, $n\in \N$. Then,
	Theorem \ref{cor_pert_compact} applies to $M_{m,\varphi,\varphi}$, indeed $M_{m,\varphi,\varphi}$ is a compact perturbation of the multiplier $M_{ m',\varphi,\varphi}$ where $m'=\{m_n'\}$ and $m_{2n-1}'=0$, $m_{2n}'=2$, $n\in \N$.  More precisely, since $M_{ m',\varphi,\varphi}=I$, we can say that $\sigma(M_{m,\varphi,\varphi})$ is an at most countable set, $1\in \sigma(M_{m,\varphi,\varphi})$ and  $\sigma(M_{m,\varphi,\varphi})\backslash\{1\}$ consists of isolated eigenvalues with finite multiplicities which have $1$ as limit point if $\sigma(M_{m,\varphi,\varphi})$ is infinite.
\end{exm}

As particular case of Theorem \ref{cor_pert_compact} we get the following extension of Proposition \ref{lem_comp}. 

\begin{cor}
	Let $\varphi$, $\psi$ be dual frames for $\H$ and $m\in \ell^\infty$ such that $\displaystyle \lim_{n\to \infty} m_n=l$. Then the following statements hold. 
	\begin{enumerate}
		\item $\sigma(M_{m,\varphi,\psi})$ is at most countable.
		\item $l\in  \sigma(M_{m,\varphi,\psi})$ and  $\sigma(M_{m,\varphi,\psi})\backslash\{l\}=\sigma_p(M_{m,\varphi,\psi})\backslash\{l\}$.
		\item Each eigenvalue $\lambda$ of $M_{m,\varphi,\psi}$ with $\lambda\neq l$ is isolated in $\sigma(M_{m,\varphi,\psi})$ and has finite multiplicity.
		\item  If $\sigma(M_{m,\varphi,\psi})$ is infinite, then $l$ is the limit point for $\sigma_p(M_{m,\varphi,\psi})$.
		\item If $\lambda \neq l$, then $\lambda$ is an eigenvalue of $M_{m,\varphi,\psi}$ if and only if $\ol\lambda$ is an eigenvalue of ${M_{m,\varphi,\psi}}^*=M_{ m^*,\psi,\varphi}$.
	\end{enumerate}
\end{cor}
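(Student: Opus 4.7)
The strategy is to recognize that this corollary is a direct instance of Theorem \ref{cor_pert_compact} applied to the special case of a single limit point. If $\lim_{n\to\infty} m_n = l$, then the set of limit points of $m$ is exactly the singleton $\{l\}$, so I take $j=1$, $I_1 = \N$, and $l_1 = l$. The associated sequence $m'$ in the theorem is then the constant sequence $m'_n = l$ for all $n$.

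The first key observation to exploit is that, since $\varphi$ and $\psi$ are dual frames, the constant multiplier collapses to a scalar operator:
\begin{equation*}
M_{m',\varphi,\psi} f = \sum_{n\in\N} l\, \pin{f}{\psi_n}\varphi_n = l \sum_{n\in\N} \pin{f}{\psi_n}\varphi_n = l f, \qquad f\in\H,
\end{equation*}
so $M_{m',\varphi,\psi} = lI$. Consequently $\sigma(M_{m',\varphi,\psi}) = \{l\}$ and, assuming $\H$ is infinite-dimensional, $\sigma_e(M_{m',\varphi,\psi}) = \{l\}$ as well (since $N(lI - lI) = \H$ is infinite-dimensional). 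With this identification, every item follows mechanically. Statement (i) is immediate from Theorem \ref{cor_pert_compact}(1), since $\{l\}$ is trivially at most countable. From Theorem \ref{cor_pert_compact}(2) together with Lemma \ref{lem_essen}(1), we get $\sigma_e(M_{m,\varphi,\psi}) = \{l\} \subset \sigma(M_{m,\varphi,\psi})$, which yields the first half of (ii). The second half of (ii), as well as (iii), follows from Theorem \ref{cor_pert_compact}(3): every point of $\sigma(M_{m,\varphi,\psi})\setminus\{l\} = \sigma(M_{m,\varphi,\psi})\setminus\sigma_e(M_{m,\varphi,\psi})$ is an isolated eigenvalue of finite multiplicity, hence in particular lies in $\sigma_p(M_{m,\varphi,\psi})$. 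Item (iv) is Theorem \ref{cor_pert_compact}(4) combined with $\sigma_e = \{l\}$, and (v) is Theorem \ref{cor_pert_compact}(5), with the adjoint identity $M_{m,\varphi,\psi}^* = M_{m^*,\psi,\varphi}$ already recalled in the footnote before Proposition \ref{lem_comp}.

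There is essentially no obstacle beyond the bookkeeping above; the only subtlety is the degenerate case where $\H$ is finite-dimensional, in which case the whole discussion becomes trivial (every bounded operator has finite spectrum equal to its point spectrum). I would therefore either tacitly restrict to the infinite-dimensional setting or briefly note that in the finite-dimensional case the conclusions (i)--(v) are routine.
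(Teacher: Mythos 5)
Your proof is correct and follows exactly the route the paper intends: the corollary is stated as the special case $j=1$ of Theorem \ref{cor_pert_compact}, with $m'$ the constant sequence $l$ and $M_{m',\varphi,\psi}=lI$ by the duality relation, so that $\sigma_e(M_{m,\varphi,\psi})=\{l\}$ and items (i)--(v) follow from the theorem together with Lemma \ref{lem_essen}. Your aside on the finite-dimensional case is harmless (the paper tacitly works in an infinite-dimensional $\H$, as already needed for Proposition \ref{lem_comp}(ii)), and your write-up is in fact more explicit than the paper, which gives no proof beyond the phrase ``as particular case of Theorem \ref{cor_pert_compact}''.
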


\section{Proof of Theorem \ref{th_main_ex1}}
\label{sec:2}

The second class of multipliers with at most countable spectra we analyze concerns dual frames with finite excesses. We recall that the {\it excess} of a sequence $\varphi$ is 
$$e(\varphi)=\sup\{|I|:I\subseteq \N \text{ and } \spann\{\varphi_n\}_{n\in \N\backslash I}=\spann\{\varphi_n\}_{n\in \N}\},$$ 
and for a frame $\varphi$, the excess  $e(\varphi)$ is equal to $\dim N(D_\varphi)=\dim R(C_\varphi)^\perp$, see \cite{BCHL_excess,Holub} (thus a frame has null excess if and only if it is a Riesz basis). 
 Note also that dual frames have the same excess (\cite[Theorem 2.2]{Beric}). 
 We will make use of the following result.
 
\begin{pro}
	\label{pro_phi_mphi}
	Let $\varphi$ be a frame for $\H$ and $m$ a complex sequence such that also $m\varphi$ is a frame for $\H$. Then $\varphi$ and $m\varphi$ have the same excess. 
\end{pro}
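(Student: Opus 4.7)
The approach is to compare the index sets $I \subseteq \N$ that are removable from $\varphi$ (in the sense $\spann\{\varphi_n\}_{n \in \N \setminus I} = \spann\{\varphi_n\}_{n \in \N}$) with those removable from $m\varphi$, by exploiting the pointwise identity
\[
\langle f, m_n\varphi_n\rangle = \overline{m_n}\langle f,\varphi_n\rangle, \qquad f \in \H,\; n \in \N,
\]
which links the two orthogonality conditions at each index.

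The principal case is $m_n \neq 0$ for every $n$. Here the identity shows that, for any $I \subseteq \N$ and any $f \in \H$, one has $f \perp \varphi_n$ for all $n \in \N \setminus I$ if and only if $f \perp m_n\varphi_n$ for all $n \in \N \setminus I$. Hence $\{\varphi_n\}_{n \in \N\setminus I}$ is complete in $\H$ if and only if $\{m_n\varphi_n\}_{n \in \N\setminus I}$ is, and since $\varphi$ and $m\varphi$ are both frames their full closed spans both equal $\H$. The suprema defining $e(\varphi)$ and $e(m\varphi)$ then range over the same family of subsets of $\N$, giving $e(\varphi) = e(m\varphi)$.

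For the general case, set $Z := \{n \in \N : m_n = 0\}$. The key auxiliary step is to prove that $\{\varphi_n\}_{n \in \N \setminus Z}$ is still complete in $\H$: if $f \perp \varphi_n$ for every $n \notin Z$, then by the identity (and trivially on $Z$) $f$ is orthogonal to every element of the frame $m\varphi$, and its completeness forces $f = 0$. Applying the principal-case equivalence to the subsequences over $\N \setminus Z$ (where all coefficients are non-zero and both full closed spans equal $\H$), together with the observation that the zero components of $m\varphi$ contribute freely to the removable sets, yields $e(m\varphi) = |Z| + e(\{\varphi_n\}_{n \in \N \setminus Z})$. A parallel identity $e(\varphi) = |Z| + e(\{\varphi_n\}_{n \in \N \setminus Z})$ then delivers $e(\varphi) = e(m\varphi)$.

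I expect the main obstacle to be precisely this last identity $e(\varphi) = |Z| + e(\{\varphi_n\}_{n \in \N \setminus Z})$. The inequality $\geq$ is immediate by adjoining $Z$ to any removable set of the subsequence. The reverse inequality is subtler, because a removable set $J$ for $\varphi$ need not contain $Z$ and the naive union $J \cup Z$ can fail to be removable (the intersection of two complete subfamilies is not complete in general). The natural workaround is to pass to the kernel characterization $e(\varphi) = \dim N(D_\varphi)$ and use the direct-sum decomposition $N(D_{m\varphi}) = \ell^2(Z) \oplus N_0$, with $N_0 := N(D_{m\varphi}) \cap \ell^2(\N \setminus Z)$, thereby reducing the dimension comparison to the already-handled principal case on $\N \setminus Z$ together with the closed-range (frame) property of the synthesis operator $D_\varphi$.
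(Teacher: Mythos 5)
Your overall route coincides with the paper's: isolate the zero set $Z=\{n\in\N : m_n=0\}$, observe that $\{\varphi_n\}_{n\in\N\setminus Z}$ is complete because $m\varphi$ is a frame, and use that multiplying by non-vanishing scalars leaves all closed spans of subfamilies, hence all removable index sets and so the excess, unchanged. The paper finishes exactly where you do, but dispatches the crux by citation: since $Z$ is removable for $\varphi$, either $|Z|=\infty$ and both excesses are infinite, or $|Z|<\infty$ and $\{\varphi_n\}_{n\in\N\setminus Z}$ is a frame with excess $e(\varphi)-|Z|$ by \cite[Theorem 5.4.7]{Chris}, which is precisely your identity $e(\varphi)=|Z|+e(\{\varphi_n\}_{n\in\N\setminus Z})$.

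The one step your write-up does not actually close is the one you yourself flag. The decomposition you name, $N(D_{m\varphi})=\ell^2(Z)\oplus N_0$, only re-proves the easy identity $e(m\varphi)=|Z|+e(\{m_n\varphi_n\}_{n\in\N\setminus Z})$ (the deleted vectors are zero there); it gives no information about $N(D_\varphi)$, which is where the delicate inequality $e(\varphi)\le |Z|+e(\{\varphi_n\}_{n\in\N\setminus Z})$ lives. To finish along your kernel route, assume $|Z|<\infty$ (the infinite case is trivial, both excesses being at least $|Z|$), and set $\varphi'=\{\varphi_n\}_{n\in\N\setminus Z}$: the restriction map $c\mapsto c|_Z$ from $N(D_\varphi)$ into $\ell^2(Z)$ has kernel $N(D_\varphi)\cap\ell^2(\N\setminus Z)=N(D_{\varphi'})$, so $\dim N(D_\varphi)\le |Z|+\dim N(D_{\varphi'})$; moreover $\varphi'$ is complete and is obtained from the frame $\varphi$ by deleting finitely many elements, hence is itself a frame, so $\dim N(D_{\varphi'})=e(\varphi')$ and $\dim N(D_\varphi)=e(\varphi)$. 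With this one-line rank--nullity observation (or simply with the citation of \cite[Theorem 5.4.7]{Chris} that the paper uses) your argument is complete and is essentially the paper's proof.
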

\begin{proof}
	Let $J=\{n\in \N: m_n=0\}$. Since 
	\begin{equation}
	\label{excesss}
	\H=\spann(m\varphi)=\spann(\{m_n\varphi_n\}_{n\in \N\backslash J })=\spann(\{\varphi_n\}_{n\in \N\backslash J })
	\end{equation}
	it follows that $|J|\leq e(\varphi)$. Therefore, if $|J|=\infty$, then both the excesses of $\varphi$ and $m\varphi$ are infinity. On the other hand, if $|J|<\infty$ we can say that $\{\varphi_n\}_{n\in \N\backslash J }$ is a frame for $\H$ with excess $e(\varphi)-|J|$ (see \cite[Theorem 5.4.7]{Chris}). Hence, by \eqref{excesss}, $e(m\varphi)=|J|+(e(\varphi)-|J|)=e(\varphi)$. 	
\end{proof}

We need some other preparatory results before to prove Theorem \ref{th_main_ex1}. Let $\varphi$ be a frame for $\H$ with finite excess $q$, $\psi$ a dual of $\varphi$ and $m\in \ell^\infty$. By \cite[Theorem 2.4]{Holub},  $\N=I\cup J$ with $I\cap J=\varnothing$, $|I|=q$ and such that $\{\varphi_n\}_{n\in J}$ is a Riesz basis for $\H$. We write $I=\{n_1,n_2,\dots,n_q\}$. As a consequence, for every $i=1,\dots,q$, we can find a unique $d_i\in N(D_\varphi)$ such that $d_i^{n_k}=\delta_{n_i,n_k}$ for all $k=1,\dots,q$. Moreover, $d_1,\dots, d_q$ form a spanning set for $N(D_\varphi)$. 

Under the assumption that $\inf_{n\in \N} |m_n-\lambda|>0$ it is clear that a spanning set of $N(D_{(m-\lambda) \varphi})$ is $\{u_1,\dots,u_q\}$ where 
\begin{equation}
\label{spanning_set}
u_i=\{u_i^n\}_{n\in \N}, \qquad u_i^n= \frac{d_i^n}{m_n-\lambda}, \qquad \forall  i=1,\dots,q, \;\forall n\in \N. 
\end{equation}

Now we define a $q\times q$ matrix as follows 
\begin{equation}
\label{matr_spec}
\mathcal{A}_\lambda=(\pin{u_i}{v_j}_2)_{i,j=1,\dots,q}
\end{equation}
where $\{v_1,\dots,v_q\}$ is a spanning set of $N(D_\psi)$ and $\pint_2$ is the inner product of $\ell^2$. The matrix $\mathcal{A}_\lambda$ depends on the choice of $d_i\in N(D_\varphi)$ and $v_i\in N(D_\psi)$. However, the property of $\det(\mathcal{A}_\lambda)$ of being $0$ does not depend on the choice of $d_i\in N(D_\varphi)$ and $v_i\in N(D_\psi)$. Indeed, $\det(\mathcal{A}_\lambda)=0$ holds if and only if there exists $u\in N(D_{(m-\lambda) \varphi})$, $u\neq 0$, such that $u\in N(D_\psi)^\perp$, i.e. if and only if $N(D_{(m-\lambda) \varphi}) \cap R(C_\psi)\neq \{0\}$.

\begin{lem}
	\label{lem_spec_ex}
	Let $\varphi$ be a frames for $\H$ with finite excess, $\psi$ is a dual frame of $\varphi$ and $m\in \ell^\infty$. The spectrum $\sigma(M_{m,\varphi,\psi})$ of the multiplier $M_{m,\varphi,\psi}$ is the set of $\lambda\in \C$ such that one of the following statements is satisfied:
	\begin{enumerate} 
		\item[\emph{(i)}] $\lambda$ is a limit point for $m$;
		\item[\emph{(ii)}] $\lambda$ is not an  limit point for $m$, $\lambda\in m$ and $(m-\lambda)\varphi:=\{(m_n-\lambda)\varphi_n\}_{n\in \N}$ is not complete in $\H$;
		\item[\emph{(iii)}] $\lambda$ is not an  limit point for $m$, $\lambda\in m$ and $\ell^2$ is not the direct sum of $N(D_{(m-\lambda)\varphi})$ and $R(C_\psi)$;
		\item[\emph{(iv)}]  $\inf_{n\in \N} |m_n-\lambda|>0$ and 
		$\det( \mathcal{A}_\lambda)= 0$. 		
	\end{enumerate}
	Furthermore, 
	\begin{enumerate}
		\item[\emph{(a)}] if $\lambda$ satisfies \emph{(i)} and $\lambda=m_n$ for infinitely many $n$, then $\lambda \in \sigma_p(M_{m,\varphi,\psi})$;
		\item[\emph{(b)}] if $\lambda$ satisfies \emph{(i)} and $\lambda=m_n$ for only finitely many $n$, then either $\lambda \in \sigma_p(M_{m,\varphi,\psi})$ or $\lambda \in \sigma_c(M_{m,\varphi,\psi})$;
		\item[\emph{(c)}] if $\lambda$ satisfies \emph{(ii)}, \emph{(iii)} or \emph{(iv)}, then $\lambda \in \sigma_p(M_{m,\varphi,\psi})$.
	\end{enumerate}
	 
\end{lem}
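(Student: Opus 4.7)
My plan is to recast $\lambda\in\sigma(M_{m,\varphi,\psi})$ as non-bijectivity of $M_{m-\lambda,\varphi,\psi}=M_{m,\varphi,\psi}-\lambda I$ and invoke Theorem~\ref{th_inv}(iii): $\lambda\notin\sigma(M_{m,\varphi,\psi})$ iff $(m-\lambda)\varphi$ is a frame for $\H$ \emph{and} $N(D_{(m-\lambda)\varphi})\dotplus R(C_\psi)=\ell^2$. Two dimension facts drive the whole argument. Since $\psi$ is a dual of $\varphi$ it shares the excess $q:=e(\varphi)$, so $R(C_\psi)=N(D_\psi)^\perp$ has codimension $q$ in $\ell^2$; and whenever $(m-\lambda)\varphi$ is a frame, Proposition~\ref{pro_phi_mphi} gives $\dim N(D_{(m-\lambda)\varphi})=q$. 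With both dimensions equal to $q$, the direct-sum condition is equivalent to $N(D_{(m-\lambda)\varphi})\cap R(C_\psi)=\{0\}$, which by the discussion preceding the lemma is exactly $\det\mathcal{A}_\lambda\neq 0$.

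I then carry out a case analysis on $\lambda$. If $\lambda$ is a limit point of $m$ (case~(i)), I show $(m-\lambda)\varphi$ is not a frame. Let $J_0:=\{n:m_n=\lambda\}$. When $|J_0|=\infty$, $\spann\{\varphi_n\}_{n\notin J_0}\subsetneq\H$ because $|J_0|>q$, so $(m-\lambda)\varphi$ is incomplete. When $|J_0|<\infty$, I extract $m_{n_k}\to\lambda$ with $m_{n_k}\neq\lambda$; using the upper frame bound $\|\varphi_n\|\le\sqrt{B}$ I get $\|(m_{n_k}-\lambda)\varphi_{n_k}\|\to 0$, and a putative frame $(m-\lambda)\varphi$ would have excess $q$ by Proposition~\ref{pro_phi_mphi}, hence a cofinite Riesz-basis subfamily with norms bounded below, a contradiction. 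When $\lambda\in m$ is not a limit point, $J_0$ is finite and $\inf_{n\notin J_0}|m_n-\lambda|>0$; $(m-\lambda)\varphi$ is complete iff $|J_0|\le q$, so case~(ii) corresponds to $|J_0|>q$, while in the complete case $(m-\lambda)\varphi$ is a frame and the pivot turns the direct-sum failure (case~(iii)) into the intersection condition. Finally, $\inf_n|m_n-\lambda|>0$ (case~(iv)) makes $(m-\lambda)\varphi$ a frame automatically. The point-spectrum claims (a) and (c) then follow from Theorem~\ref{th_inv}(i), which identifies non-injectivity with $N(D_{(m-\lambda)\varphi})\cap R(C_\psi)\neq\{0\}$: for (a) the subspace $\ell^2(J_0)\subseteq N(D_{(m-\lambda)\varphi})$ is infinite-dimensional and must meet the codimension-$q$ subspace $R(C_\psi)$; under (ii) the same count gives $|J_0|-q>0$; and under (iii)/(iv) the non-trivial intersection is precisely what fails.

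The delicate step is claim~(b). Here $\lambda$ is a limit point attained only finitely many times, and I must exclude the residual spectrum. The sub-case $|J_0|>q$ is handled by the same $\ell^2(J_0)\cap R(C_\psi)$ trick, giving $\lambda\in\sigma_p(M_{m,\varphi,\psi})$ directly, so I may focus on $|J_0|\le q$. Then both $\{\varphi_n\}_{n\notin J_0}$ and $\{\psi_n\}_{n\notin J_0}$ are frames for $\H$ by \cite[Theorem~5.4.7]{Chris}, and I plan to show that injectivity of $M_{m-\lambda,\varphi,\psi}$ forces its range to be dense. If $g\perp R(M_{m-\lambda,\varphi,\psi})$, then $M_{m^*-\bar\lambda,\psi,\varphi}g=0$ and the sequence $\{\overline{m_n-\lambda}\pin{g}{\varphi_n}\}_{n\notin J_0}$ lies in $N(D_{\{\psi_n\}_{n\notin J_0}})$; I would combine this with Theorem~\ref{th_inv}(i) for the adjoint multiplier and the assumed injectivity of $M_{m-\lambda,\varphi,\psi}$ to force $g=0$. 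Executing this symmetry cleanly---since the two conditions $N(D_{(m-\lambda)\varphi})\cap R(C_\psi)=\{0\}$ and $N(D_{(m^*-\bar\lambda)\psi})\cap R(C_\varphi)=\{0\}$ are not manifestly equivalent---is the main obstacle I anticipate; the rest of the lemma is a direct unpacking of Theorem~\ref{th_inv} combined with the dimension pivot.
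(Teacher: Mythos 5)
Your overall skeleton (recasting $\lambda\in\sigma(M_{m,\varphi,\psi})$ via Theorem~\ref{th_inv}(iii) and pivoting on the two dimension counts $\dim N(D_{(m-\lambda)\varphi})=q=\dim R(C_\psi)^\perp$) is essentially the paper's strategy for cases (ii)--(iv), and your argument that a limit point $\lambda$ of $m$ destroys the frame property of $(m-\lambda)\varphi$ (cofinite Riesz-basis subfamily via Holub plus norms tending to $0$) is a correct alternative to the paper's approximate-eigenvector computation \eqref{range_notclosed}. However, there are two genuine gaps. First, the claimed equivalence ``$(m-\lambda)\varphi$ is complete iff $|J_0|\leq q$'' (with $J_0=\{n:m_n=\lambda\}$) is false: completeness does imply $|J_0|\leq q$, but not conversely. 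Take $\varphi=\{e_1,e_1,e_2,e_3,\dots\}$, which has excess $q=1$, and let $\lambda$ be a value of $m$ attained only at the index of $e_2$; then $|J_0|=1\leq q$ while $\{(m_n-\lambda)\varphi_n\}$ spans only $\spann\{e_1,e_3,e_4,\dots\}$. Consequently your proof of claim (c) in case (ii) (and in the subcase of (iii) where $(m-\lambda)\varphi$ is not a frame), which rests on ``$|J_0|-q>0$'' to force $\ell^2$-sequences supported on $J_0$ to meet $R(C_\psi)$, collapses. What is actually true, and what your count must be replaced by, is that incompleteness of $\{\varphi_n\}_{n\notin J_0}$ is equivalent to $\dim N(D_{(m-\lambda)\varphi})=|J_0|+\dim\bigl(N(D_\varphi)\cap\ell^2(\N\back J_0)\bigr)>q$, after which the codimension-$q$ argument and Theorem~\ref{th_inv}(i) give $\lambda\in\sigma_p(M_{m,\varphi,\psi})$.

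Second, claim (b) is not proved: you only sketch a plan and yourself flag its key step---that injectivity of $M_{m,\varphi,\psi}-\lambda I$ forces density of its range, equivalently injectivity of $M_{m^*,\psi,\varphi}-\ol\lambda I$---as an unresolved obstacle. In case (b) $\lambda$ is a limit point of $m$, so neither $(m-\lambda)\varphi$ nor $(m^*-\ol\lambda)\psi$ is a frame and the dimension pivot that makes the two intersection conditions comparable is unavailable; the hoped-for symmetry does not come for free. The paper handles (b) with exactly the tool you discarded: choosing (via Holub) the indices so that $\psi'=\{\psi_{q+1},\psi_{q+2},\dots\}$ is a Riesz basis, the canonical dual elements $\widetilde\psi'_{n_k}$ are bounded below while $(M_{m,\varphi,\psi}-\lambda I)\widetilde\psi'_{n_k}\to 0$, and from this it concludes that an injective $M_{m,\varphi,\psi}-\lambda I$ cannot have closed range, placing $\lambda$ in $\sigma_p\cup\sigma_c$. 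To complete your version you must either reinstate such an explicit sequence or genuinely carry out the adjoint/density argument you outline; as it stands, the exclusion of the residual spectrum in (b) is missing.
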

\begin{proof}
	We divide the proof in steps. 
	\begin{itemize}
		\item $\lambda$ satisfies (i) $\implies$ $\lambda\in \sigma(M_{m,\varphi,\psi})$. \\ Let $\lambda$ be a limit point for $m$. 
		By the discussion preceding the statement we can suppose, without loss of generality, that $\psi':=\{\psi_{q+1},\psi_{q+2},\dots\}$ is a Riesz basis for $\H$. We denote by $\widetilde\psi'=\{\widetilde\psi'_n\}$ the canonical dual of $\psi'$. Let $\{m_{n_k}\}$ be a subsequence of $\{m_n\}$ such that $m_{n_k}\to \lambda$. The sequence $\{\widetilde\psi'_{n_k}\}$ is contained into a Riesz basis, so it is bounded from below and above; however
		\begin{equation}
		\label{range_notclosed}
		(M_{m,\varphi,\psi}-\lambda I) \widetilde\psi'_{n_k}=\sum_{i=1}^q (m_i-\lambda)\pin{\widetilde\psi'_{n_k}}{\psi_i}\varphi_i+(m_{n_k}-\lambda)\varphi_{n_k}\to 0,
		\end{equation}
		as $k\to \infty$. Therefore, $\lambda \in \sigma(M_{m,\varphi,\psi})$.
		\item (a) holds. \\ 
		If $\lambda\in m$ is a limit point for $m$ and $\lambda=m_n$ for infinitely many $n$, then infinitely many elements in $(m-\lambda)\varphi$ are zero. So $(m-\lambda)\varphi$ cannot be complete in $\H$ (recall that $\varphi$ has finite excess) and Theorem \ref{th_inv}(i) implies that $\lambda \in \sigma_p(M_{m,\varphi,\psi})$. 
		\item (b) holds. \\  Let $\lambda$ be a limit point for $m$ such that $\lambda=m_n$ for only finitely many $n$. If $M_{m,\varphi,\psi}-\lambda I$ is not injective, then $\lambda \in \sigma_p(M_{m,\varphi,\psi})$ by definition.   In the other case, i.e. if $M_{m,\varphi,\psi}-\lambda I$ is injective, then  \eqref{range_notclosed} shows that $\lambda \in \sigma_c(M_{m,\varphi,\psi})$.
		\item $\lambda$ satisfies (ii) $\implies$ $\lambda \in \sigma_p(M_{m,\varphi,\psi})$.\\
		Let $\lambda \in m$ be such that it is not a limit point for $m$ and $(m-\lambda)\varphi$ is not complete in $\H$.  Thus, by Theorem  \ref{th_inv}(i), $\lambda\in \sigma_p(M_{m,\varphi,\psi})$. 
		\item $\lambda$ satisfies (iii) $\implies$ $\lambda \in \sigma_p(M_{m,\varphi,\psi})$.\\ 
		Let $\lambda \in m$ be such that it is not a limit point for $m$ and $\ell^2$ is not the direct sum of $N(D_{(m-\lambda)\varphi})$ and $R(C_\psi)$. First of all, we notice that $(m-\lambda)\varphi$ contains at most a finite number of zero elements. So if this sequence is not a frame for $\H$, then it is not complete in $\H$ by \cite[Theorem 5.4.7]{Chris}, i.e. the previous case (ii) applies. Thus, we can confine ourself to the case where $(m-\lambda)\varphi$ is a frame for $\H$. Taking into account  Proposition \ref{pro_phi_mphi} and that $\phi$ and $\psi$ have the same excess, 
		$\dim N(D_{(m-\lambda)\varphi})=\dim N(D_{\varphi})=\dim N(D_{\psi})=\dim R(C_\psi)^\perp<\infty$. Since $\ell^2$ is not the direct sum of $N(D_{(m-\lambda)\varphi})$ and $R(C_\psi)$, we necessarily have   $N(D_{(m-\lambda)\varphi})\cap R(C_\psi)\neq \{0\}$, i.e. $\lambda\in \sigma_p(M_{m,\varphi,\psi})$ again by Theorem  \ref{th_inv}(i).  
		\item $\lambda$ satisfies (iv) $\implies$ $\lambda \in \sigma_p(M_{m,\varphi,\psi})$. \\ Let us assume that $\inf_{n\in \N} |m_n-\lambda|>0$ and 
		$\det( \mathcal{A}_\lambda)= 0$. The first condition ensures that $(m-\lambda)\varphi$ is a frame for $\H$. The latter condition
		is equivalent to $N(D_{(m-\lambda)\varphi})\cap R(C_\psi)\neq \{0\}$ (see the comment right before the statement). 
		Thus, $\lambda \in \sigma_p(M_{m,\varphi,\psi})$, by Theorem  \ref{th_inv}(i). 
		\item $\lambda\in \sigma(M_{m,\varphi,\psi})$ $\implies$ (i), (ii), (iii) or (iv) hold. \\
		Let  $\lambda\in \sigma(M_{m,\varphi,\psi})$. Then there are only three possibility: $\lambda$ is a limit point for $m$, $\lambda$ is not a limit point for $m$ but $\lambda\in m$, or $\inf_{n\in \N} |m_n-\lambda|>0$.  The first case corresponds to (i). In the second case, by Theorem  \ref{th_inv}(iii) $\ell^2$ is not the direct sum of $N(D_{(m-\lambda)\varphi})$ and $R(C_\psi)$ (which corresponds to (iii)) or $(m-\lambda)\varphi$ is not a frame for $\H$. The latter subcase implies that $(m-\lambda)\varphi$ is not complete in $\H$ (i.e., statement (ii)) by \cite[Theorem 5.4.7]{Chris}, since $(m-\lambda)\varphi$ contains at most a finite number of zero elements (recall that we are considering $\lambda\in m$, but not a limit point for $m$). Finally, let us assume that $\inf_{n\in \N} |m_n-\lambda|>0$. Since $(m-\lambda)\varphi$ is a frame for $\H$, by Theorem \ref{th_inv}(iii) we necessarily have that  $\ell^2$ is not the direct sum of $N(D_{(m-\lambda)\varphi})$ and $ R(C_\psi)$. Since $\dim N(D_{(m-\lambda)\varphi})=\dim R(C_\psi)^\perp<\infty$ by Proposition \ref{pro_phi_mphi} and the fact that $\phi$ and $\psi$ have the same excess, we have that $N(D_{(m-\lambda)\varphi})\cap R(C_\psi)\neq \{0\}$ , i.e.  $\det(\mathcal{A}_\lambda)=0$ (statement (iv)). 
	\end{itemize}
\end{proof}

Now we are able to prove the main result of this section regarding the multipliers of dual frames with finite excess.

\begin{proof}[Proof of Theorem \ref{th_main_ex1}]
		Statements (ii), (iii) and (iv) hold by Lemma \ref{lem_spec_ex} (note that if $m$ contains only a finite number of distinct elements, then the zeros of $\det(\mathcal{A}_\lambda)$ are the zeros of a polynomial and $\sigma_c(M_{m,\varphi,\psi})=\varnothing$ by parts (a), (b) and (c) of Lemma \ref{lem_spec_ex}). 
		 
		To demonstrate statements (i), (v) and (vi), we first prove that the essential spectrum $\sigma_e(M_{m,\varphi,\psi})$ is the set of limit points for $m$.  
		By \cite[Theorem 1.2]{hlll} there exist a Hilbert space $\mathcal K$ such that $\H\subset \mathcal K$, an orthogonal projection $P:\mathcal K\to \H$ and a Riesz basis $\xi:=\{\xi_n\}$ of $\mathcal K$ with dual $\widetilde \xi:=\{\widetilde\xi_n\}$ such that $\varphi_n=P \xi_n$ and $\psi_n=P \widetilde \xi_n$ for every $n\in \N$. Then $M_{m,\varphi,\psi}$ can be identified with the restriction of $M_{m,P \xi,P \widetilde \xi}$ to $\H$, so in particular the essential spectra of the two operators are the same. Writing $M_{m,P \xi,P \widetilde \xi}=M_{m,\xi,\widetilde \xi}-M_{m,P \xi,(I-P)\widetilde \xi}-M_{m,(I-P)\xi,P\widetilde \xi}-M_{m,(I-P)\xi,(I-P)\widetilde \xi}$, we note that $M_{m,P\xi,P\widetilde \xi}$ is a finite rank perturbation of $M_{m,\xi,\widetilde \xi}$ (indeed $I-P$ is a projection onto a subspace with dimension $q$, the excess of $\varphi$). Therefore, by Lemma \ref{lem_essen},  $\sigma_e(M_{m,\varphi,\psi})$ is equal to $\sigma_e(M_{m,\xi,\widetilde \xi})$, i.e. the set of limit points for $m$. 
		Now statements (i) and (v) hold by Lemma \ref{lem_essen}(ii) and statement (vi) follows by statements (iii) and (v)\footnote{We remark that also statement (iii) can be proved with the help of Lemma \ref{lem_essen}.}.
		
		Statement (vii) is proved making use again of Lemma \ref{lem_essen}. 
		The proof of statement (viii) follows by Theorem of \cite{Behncke} (see also Corollary 1 of \cite{Behncke}). In more details, assume that $m$ is a real sequence with a finite set of limit points $\{l_i\}_{i\in I}$ and contains infinitely many different elements. Assume also that $\psi$ is the canonical dual of $\varphi$. Since $M_{m,\varphi,\psi}$ is similar\footnote{Indeed, if $S$ is the frame operator of $\varphi$, then $\rho=S^{-\mez}\varphi$ is the Parseval frame of $\varphi$ and $S^{-\mez}M_{m,\varphi,\psi}S^{\mez}=M_{m,\rho,\rho}$.} to $M_{m,\rho,\rho}$ where $\rho$ is the canonical Parseval frame of $\varphi$ and the spectrum is preserved under similarity, we can confine ourselves to the case where $\varphi=\psi$ is a Parseval frame for $\H$. Hence, $\xi$ can be taken equal to an orthonormal basis for $\mathcal{K}$ and $M_{m,\varphi,\varphi}$ can be identified with the restriction of $M_{m,P\xi,P\xi}$ to $\H$. 
		Let $[\alpha,\beta]$ be an interval which does not contain any $l_i$. 	 
			We denote by $\{m_k'\}$ the elements of $m$ in $[\alpha,\beta]$ counted without repetitions and let $r_k$ be the multiplicity of $m_k'$ in $m$. 
			Since both $M_{m,e,e}$ and $M_{m,P\xi,P\xi}$ are self-adjoint operators, by \cite[Theorem]{Behncke} there are at least $\sum_{k} r_k-3q$  
			eigenvalues of $M_{m,P\xi,P\xi}$ in $[\alpha,\beta]$. 
			By a limit procedure, considering an increasing subsequence $\{m_n'\}_{n\in \N}$ converging to some $l_i$ and $\{m_n'\}_{n\in \N}\subset [\alpha,\beta]$ (where $[\alpha,\beta]$ does not contain limit points for $m$ different to $l_i$) we obtain that $\sigma_p(M_{m,P\xi,P\xi})$ is infinite and $l_i$ is a limit point for the eigenvalues of $M_{m,P\xi,P\xi}$. Now the conclusion follows noting that  $\sigma(M_{m,\varphi,\varphi})\backslash\{0\}=\sigma_\mathcal{K}(M_{m,P\xi,P\xi})\backslash\{0\}$\footnote{$\sigma_\mathcal{K}(M_{m,P\xi,P\xi})$ denotes the spectrum of $M_{m,P\xi,P\xi}$ as operator in $\mathcal{K}$.}. 
\end{proof}

By Lemma \ref{lem_spec_ex} and Theorem \ref{th_main_ex1} we can add that if $\varphi$ is a frame for $\H$ with finite excess, $\psi$ is a dual frame of $\varphi$ and $m\in \ell^\infty$,  then
$\sigma(M_{m,\varphi,\psi})$ is at most countable if and only if $m$ has an at most countable number of limit points. We can add a further consideration about the speed of convergence of eigenvalues which generalizes Proposition 3.3 in \cite{Balazs_surv} about the case of Schatten $p$-classes multipliers.

\begin{cor}
	Let $\varphi$, $\psi$ be a frame for $\H$ with finite excess and its canonical dual. Let $m\in \ell^\infty$ be a real sequence with a finite set of limit points $\{l_1,\dots,l_j\}$. Suppose that $m$ is a disjoint union $m=\cup_{i=1}^j\{m_{i,n}:n\in \N\}$ and for each $i=1,\dots,j$ there exists $p_i>1$ such that $\{m_{i,n}-l_i:n\in \N\}\in \ell^{p_i}$. Then $\sigma_p(M_{m,\varphi,\psi})=\cup_{i=1}^j\{\lambda_{i,n}:n\in \N\}$ such that for each $i=1,\dots,j$ one has $\{\lambda_{i,n}-l_i:n\in \N\}\in \ell^{p_i}$. 
\end{cor}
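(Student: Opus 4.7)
The plan is to combine the similarity and dilation reductions already employed in the proof of Theorem~\ref{th_main_ex1}(viii) with a Weyl-type eigenvalue counting inequality for finite-rank self-adjoint perturbations. First, since $\psi$ is the canonical dual, the similarity $M_{m,\varphi,\psi}\sim M_{m,\rho,\rho}$, with $\rho=S^{-1/2}\varphi$ the canonical Parseval frame of $\varphi$, reduces the problem to the Parseval case $\varphi=\psi=\rho$, because similarity preserves $\sigma_p$ together with eigenvalue multiplicities. Next, by \cite[Theorem 1.2]{hlll} dilate $\rho$ to an orthonormal basis $\xi=\{\xi_n\}$ of some $\mathcal{K}\supset\H$ via $\rho_n=P\xi_n$, where $P$ is the orthogonal projection of $\mathcal{K}$ onto $\H$ and $\dim\H^\perp=e(\varphi)=:q<\infty$. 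Setting $A:=M_{m,\xi,\xi}$ and $B:=M_{m,P\xi,P\xi}$ one has $B=A+R$ with $R$ self-adjoint of rank at most $3q$ (the three cross-terms involving $I-P$), and the block decomposition $\mathcal{K}=\H\oplus\H^\perp$ identifies $B$ with the direct sum of $M_{m,\rho,\rho}$ and the zero operator, so $\sigma_p(B)=\sigma_p(M_{m,\varphi,\psi})\cup\{0\}$ (the extra $0$ has multiplicity at most $q$, which is harmless for any $\ell^{p_i}$ sum).

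The decisive observation is that $A$ is self-adjoint (since $m$ is real) and diagonal in the orthonormal basis $\xi$, with eigenvalues exactly the $m_n$'s counted with multiplicity. Choose $\delta>0$ so that the closed $\delta$-neighbourhoods of $l_1,\dots,l_j$ are pairwise disjoint and set $J_i=(l_i-\delta,l_i+\delta)\setminus\{l_i\}$; by Theorem~\ref{th_main_ex1} the eigenvalues of $B$ in $J_i$ are isolated of finite multiplicity, and the essential spectra of $A$ and $B$ coincide, both being $\{l_1,\dots,l_j\}$. The Weyl/Kato inequality for rank-$r$ self-adjoint perturbations (the tool underlying \cite[Theorem]{Behncke}) gives, for every subinterval $J\subset J_i$,
\[
|N_A(J)-N_B(J)|\le 6q,
\]
where $N_T(J)$ denotes the number of eigenvalues of $T$ in $J$ counted with multiplicity. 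Enumerate the eigenvalues of $A$ in $J_i$ in order of decreasing distance from $l_i$ as $\{\mu_k^{(i)}\}_k$; these form a subsequence of $\{m_n\}_{n\in N_i}$, so $\{\mu_k^{(i)}-l_i\}_k\in\ell^{p_i}$ by hypothesis. Enumerate the eigenvalues of $B$ in $J_i$ similarly as $\{\lambda_k^{(i)}\}_k$. Applying the above counting bound to the annular region $\{x\in J_i:|x-l_i|>|\lambda_k^{(i)}-l_i|\}$ (a union of two subintervals of $J_i$, so with constant $12q$) forces
\[
|\lambda_k^{(i)}-l_i|\le|\mu_{k-12q-1}^{(i)}-l_i|
\]
for $k$ sufficiently large; summing $p_i$-th powers transfers the $\ell^{p_i}$ decay from $\{\mu_k^{(i)}\}$ to $\{\lambda_k^{(i)}\}$.

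Finally, the remaining eigenvalues of $M_{m,\varphi,\psi}$ lying outside $\bigcup_i J_i$, which by Theorem~\ref{th_main_ex1}(v) are isolated and of finite multiplicity and hence finitely many, can be distributed arbitrarily among the sequences $\{\lambda_{i,n}\}_n$; each sequence is then extended to be indexed by $\N$ by padding with the value $l_i$. This produces the claimed partition of $\sigma_p(M_{m,\varphi,\psi})$. The main obstacle is the passage from the counting estimate to the pointwise enumeration bound: although conceptually a standard consequence of the min-max principle, the bookkeeping has to respect multiplicities, the two-sided accumulation at each $l_i$, and the need to isolate different $l_i$'s from one another via the choice of $\delta$.
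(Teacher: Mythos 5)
Your proof is correct and follows essentially the same route as the paper: reduce via similarity to the Parseval (self-adjoint) case, dilate to an orthonormal basis so that the multiplier becomes a rank-at-most-$3q$ self-adjoint perturbation of the diagonal operator whose eigenvalues are the $m_n$, and transfer the $\ell^{p_i}$ decay through a Behncke/Weyl-type eigenvalue counting estimate. The only difference is bookkeeping: the paper counts eigenvalues of $M_{m,\varphi,\varphi}$ in the intervals between consecutive distinct symbol values on each side of $l_i$ and sums directly, getting the explicit bound $(1+6q)\sum_n|m_{i,n}-l_i|^{p_i}$, whereas you compare counting functions on annuli around $l_i$ and deduce an index-shift inequality — both are routine consequences of the same rank-perturbation counting tool.
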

\begin{proof}
	Let $m$ be as in the statement. Fix $i=1,\dots,j$ and write $\{m_{i,n}^l\}=\{m_{i,n}:n\in \N\}\cap (l_{i-1}, l_i]$ and  $\{m_{i,n}^r\}=\{m_{i,n}:n\in \N\}\cap [l_i,l_{i+1})$, where $l_{0}:=-\infty$ and $l_{j+1}:=\infty$. Now take the elements of  $\{m_{i,n}^l\}$ without repetitions and order them in a increasing way $m_1'<m_2'<\dots< m_k'<\dots$. Let $r_k$ be the multiplicity of $m_k'$ in $\{m_{i,n}^l\}$. Thinking as seen in the proof of Theorem \ref{th_main_ex1} and applying \cite[Theorem]{Behncke}, we get that in the interval $(m_k',m_{k+1}')$ there are at maximum $3q$  eigenvalues of $M_{m,\varphi,\varphi}$ repeated with multiplicity, where $q$ is the excess of $\varphi$. For the same reason, the multiplicity of an eventual eigenvalue $m_k$ is less or equal than $r_k+3q$. 
	
	Therefore, if $\Lambda_i^l=\sigma_p(M_{m,\varphi,\varphi})\cap [\min_n(m_{i,n}^l),l_i]$ (where the eigenvalues are counted with multiplicity), 
	\begin{align}
	\sum_{\lambda \in \Lambda_i^l} |\lambda-l_i|^{p_i}&\leq \sum_{k} (r_k+3q)|m_{k}'-l_i|^{p_i}+3q  \sum_{k} |m_{k}'-l_i|^{p_i}\\
	&\leq (1+6q) \sum_{n}  |m_{i,n}^l-l_i|^{p_i}<\infty. 
	\end{align}
	An analogous consideration can be applied to $\{m_{i,n}^r\}$. 
	In conclusion, $\sigma_p(M_{m,\varphi,\varphi})\cap [\min_n(m_{i,n}^l),\max_n(m_{i,n}^r)]$ can be written as a sequence $\{\lambda_{i,n}:n\in\N\}$ such that $\lambda_{i,n}\to l_i$ in $\ell^{p_i}$ for $n\to \infty$. 
\end{proof}

We now make some auxiliary considerations through examples.

\begin{exm}
	For a multiplier $M_{m,\varphi,\varphi}$ where $\varphi$ is a Parseval frame for $\H$ with infinite excess and $m=\{1,0,1,0,\dots\}$ at least one of the conditions $\sigma(M_{m,\varphi,\varphi})$ uncountable, $\sigma_p(M_{m,\varphi,\varphi})=\varnothing$ and $|\sigma_p(M_{m,\varphi,\varphi})|=\infty$ is possible. In other words, statements (i) and (ii) of Theorem \ref{th_main_ex1} may not hold for frames with infinite excess. Indeed, we can consider the following examples where $m$ is always taken as $m=\{1,0,1,0,\dots\}$. 
	\begin{enumerate}
		\item[(a)] 	Let $\H=L^2(0,1)$ and $e_n(x)=e^{2\pi i n x}$ for $n\in \Z$ and $x\in (0,1)$. Now define $\varphi_{2n+1}(x)=xe^{2\pi i n x}$ and $\varphi_{2n}(x)=(1-x^2)^\mez e^{2\pi i n x}$. With a simple calculation one can see that $\varphi:=\{\varphi_n\}_{n\in \Z}$ is a Parseval frame for the classical space $L^2(0,1)$. 
		Moreover, $M_{m,\varphi,\varphi}$ is the multiplication operator by $x^2$, thus $\sigma(M_{m,\varphi,\varphi})=[0,1]$ and  $\sigma_p(M_{m,\varphi,\varphi})=\varnothing$. 
		\item[(b)] Let $\H$ be a Hilbert space and $\{e_n\}_{n\in \N}$ an orthonormal basis for $\H$. Define a Parseval frame $\varphi=\{\varphi_n\}_{n\in \N}$ for $\H$ as $\varphi_{2n-1}=\frac {1}{n}  e_n$ and  $\varphi_{2n}=\sqrt{ 1-\frac{1}{n^2}} e_n$, $n\in \N$. We have $\sigma_p(M_{m,\varphi,\varphi})=\{\frac 1 n:n\in \N \}$. 
	\end{enumerate}
		
\end{exm}

\begin{exm}
	Let $\{e_n\}_{n\in \N}$ be an orthonormal basis for $\H$ and $\varphi=\{\varphi_n\}$ as
	$\varphi_{2n-1}=\varphi_{2n}=\frac{1}{\sqrt{2}}e_n$ (a Parseval frame for $\H$). Let $m\in \ell^\infty$. Then $\sigma(M_{m,\varphi,\varphi})$ contains $\tau:=\{\frac12(m_{2n-1}+m_{2n}):n\in \N\}$ and the limit points for $\tau$. In conclusion, statements (iii), (v) and (vi) of Theorem \ref{th_main_ex1}  may not hold if $\varphi$ has infinite excess. This example shows also that $\sigma(M_{m,\varphi,\varphi})$ may be a singleton even though $m$ has an infinite number of limit points. 
\end{exm}

\begin{exm}
	When $m$ contains infinite distinct elements but it is not a real sequence, then  $\sigma(M_{m,\varphi,\psi})$, with $\varphi$ a frame with finite excess and $\psi$ its canonical dual, may not be infinite (so statement (viii) of Theorem \ref{th_main_ex1} may not hold). We give an example of this situation where $\sigma(M_{m,\varphi,\psi})$ contains even a single point. 
	
	Let $\varphi$ be a Parseval frame for $\H$ with excess $1$ and $N(D_\varphi)=\text{span}(d)$ where $d=\{d^n\}$ is defined by 
	$d^1=\frac{1}{\sqrt{2(e+1)}}$, $d^2=\frac{1}{\sqrt{\pi^2+1}}$ and, for $k\in \N$,  $d^{2k+1}=\frac{1}{\sqrt{(2k+1)^2\pi^2+1}}$, $d^{2k+2}=\frac{1}{\sqrt{(-2k+1)^2\pi^2+1}}$ (for instance we can take the Parseval frame associated to $\{-\sum_n d^n e_n ,d^1e_1,\dots,d^1e_n, \dots\}$ with $\{e_n\}$ an orthonormal basis for $\H$). Let $m=\{m_n\}_{n\in \N}$ be defined as follows: $m_1=\frac 12 $, $m_2= 1-\frac{1}{1+\pi i}$, $m_{2k+1}=1-\frac{1}{1+(2k+1)\pi i }$ and $m_{2k+2}=1-\frac{1}{1+(-2k+1)\pi i }$ for $k\in \N$. 
	
	By \cite[Theorem 6.1]{Balazs_basic_mult},  $\sigma(M_{m,\varphi,\varphi}) \subset B_1$, the disk centered in the origin and with radius $1$,  and 
	Lemma \ref{lem_spec_ex} ensures that $1\in \sigma(M_{m,\varphi,\varphi})$. If there were $\lambda \in \sigma(M_{m,\varphi,\varphi})\backslash \{1\}$, then  $\lambda$ would satisfy Lemma \ref{lem_spec_ex}(iv) (indeed   $\{(m_n-\lambda)\varphi_n\}$ is complete and $\ell^2=N(D_{\{(m_n-\lambda)\varphi_n\}})\dotplus R(C_\varphi)$ if $\lambda \in m$), i.e. $\lambda \notin m$ and $\det (\mathcal{A}_\lambda)=0$. In particular, taking $u$ as in \eqref{spanning_set} and $v=d$ in \eqref{matr_spec}, the expression $\det (\mathcal{A}_\lambda)=0$ means 
	\begin{equation*}
	\sum_{n\in \N} \frac{d_n^2}{\lambda -m_n}=0.
	\end{equation*}
	Nevertheless, as shown in \cite{Langley} the function $\lambda \to \sum_{n\in \N} \frac{d_n^2}{\lambda -m_n}$ has no zeros in $B_1$. Thus $\sigma(M_{m,\varphi,\varphi})=\{1\}$. 
\end{exm}

\section*{Acknowledgments}

This work was partially supported by the PRIN 2017 project ``Qualitative and quantitative aspects of nonlinear PDEs''.

\vspace*{0.5cm}
\begin{center}
	\textsc{Rosario Corso, Dipartimento di Matematica e Informatica} \\
	\textsc{Università degli Studi di Palermo, I-90123 Palermo, Italy} \\
	{\it E-mail address}: {\bf rosario.corso02@unipa.it}
\end{center}

\end{document}